\theoremstyle{plain}
\theoremstyle{definition}
\theoremstyle{definition}
\newcolumntype{M}[1]{>{\centering\arraybackslash}m{#1}}
\newtheorem{thm}{Theorem}[section]
\newtheorem{rem}[thm]{Remark}
\newtheorem{prop}[thm]{Proposition}
\newtheorem{lem}[thm]{Lemma}
\newcommand{\R}{\mathbb{R}}
\newcommand\tilF{\widetilde {F}}
\newcommand\pref[1]{(\ref{#1})}
\DeclareMathOperator{\argmin}{argmin}
\begin{document}
\def\sym#1{\ifmmode^{#1}\else\(^{#1}\)\fi}

\title{SISTA: learning optimal transport costs under sparsity constraints}
\author{Guillaume Carlier$^1$, Arnaud Dupuy$^2$, Alfred Galichon$^3$, Yifei Sun$^4$}
\date{\today \\
{
\indent$^1$Department of Applied Mathematics, Universit\'{e} Paris IX Dauphine,\ \texttt{carlier@ceremade.dauphine.fr},\\
\indent$^2$Department of Economics, University of Luxembourg, \texttt{arnaud.dupuy@uni.lu},\\
\indent$^3$Departments of Economics and Courant Institute of Mathematical Sciences, New York University and Department of Economics, Sciences Po, \texttt{galichon@cims.nyu.edu}, Galichon acknowledges support from the European Research Council (ERC) under grant CoG-866274 EQUIPRICE, ``Equilibrium methods for resource allocation and dynamic pricing,''\\
\indent$^4$Courant Institute of Mathematical Sciences, New York University, \texttt{yifei@cims.nyu.edu}}}

\begin{abstract}
In this paper, we describe a novel iterative procedure called SISTA\ to
learn the underlying cost in optimal transport problems. SISTA is a hybrid
between two classical methods, coordinate descent (\textquotedblleft
S\textquotedblright -inkhorn) and proximal gradient descent
(\textquotedblleft ISTA\textquotedblright ). It alternates between a phase
of exact minimization over the transport potentials and a phase of proximal
gradient descent over the parameters of the transport cost. We prove that
this method converges linearly, and we illustrate on simulated examples that
it is significantly faster than both coordinate descent and ISTA. We apply
it to estimating a model of migration, which predicts the flow of migrants
using country-specific characteristics and pairwise measures of
dissimilarity between countries. This application demonstrates the
effectiveness of machine learning in quantitative social sciences.

\vspace{0.1cm} \emph{Keywords:} inverse optimal transport, coordinate descent, ISTA
\end{abstract}

\maketitle

\section{Introduction}

Optimal transport has received a great deal of attention recently, see~\cite%
{villani03},~\cite{villani},~\cite{Santambrogio}, and~\cite{Galichon}. The
discrete version of the problem in a nutshell is as follows. Consider a $%
N\times N$ matrix $c_{ij}$ called a \emph{transport cost}, which is the cost
of pairing $i\in \left\{ 1,...,N\right\} $ with $j\in \left\{
1,...,N\right\} $, and consider two marginal probability distributions $p$
and $q$ with support $\left\{ 1,...,N\right\} $. The optimal transport
problem is the problem of finding the optimal \emph{transport plan}, the
joint probability distribution $\pi _{ij}$ over $\left\{ 1,...,N\right\}
^{2} $ which is optimal in the sense that $\sum_{1\leq i,j\leq N}\pi
_{ij}c_{ij}$ is minimal over all the probability $\pi $ with marginal
distributions $p$ and $q$. A variant of this problem where one seeks to
minimize the cost with a regularizing entropy term, namely $\sum_{1\leq
i,j\leq N}\pi _{ij}c_{ij}+T\pi _{ij}\ln \pi _{ij}$, with $T>0$ being a
temperature parameter, has been proposed by various authors and has been
found to enjoy attractive computational and statistical properties, see e.g.~%
\cite{Idel} and references therein.

Most of the literature on optimal transport takes the transport cost matrix $%
c$ as given and seeks to compute the optimal transport plan $\pi $. However,
in some situations, one \emph{observes }$\pi $ and would like to \emph{learn}
the cost $c$. This problem, which belongs to the general class of
\textquotedblleft inverse problems,\textquotedblright ~see~\cite{Kulis},
arises in particular in economics, see \cite{GalichonSalanie}, where one
observes optimally matched agents and one would like to infer why they have
chosen to match. The problem of learning the transport cost has received
relatively little attention in the machine learning literature. Some
exceptions are~\cite{CuturiAvis}, who use a Difference of Convex (DC)\
functions approach, \cite{Huang}, who use a strategy based on Neighborhood
Components Analysis (NCA), and~\cite{DGS}, who compute a rank-regularized
moment matching estimator of the ground distance using proximal gradient
descent.

In this paper, we assume that the transportation cost $c_{ij}$ takes a
parametric form $c_{ij}^{\beta }$, where the parameter vector $\beta$ is assumed to be sparse. We introduce an
iterative procedure to estimate $\beta $ by minimizing a convex loss
function under an $l_{1}$ penalization based on the dual formulation of the
optimal transport problem. Our procedure is a hybrid between coordinate
descent and proximal gradient descent: indeed, a phase of exact minimization
with respect to the transport potentials alternates with a phase of proximal
gradient descent (ISTA, see \cite{BeckTeboulle}) with respect to the
parameters of the transport cost. This procedure is thus a natural extension
of the celebrated Sinkhorn algorithm (see an account in~\cite{PeyreCuturi}
and a historical overview in \cite{Idel}), a.k.a. the Iterative Proportional
Fitting Procedure (IPFP), which is a coordinate descent algorithm with
respect to the transport potentials, for a fixed value of the parameter
vector. One important advantage of the Sinkhorn algorithm compared with alternative
methods is that it is fast, parameteter-free and can be naturally
parallelized, as documented for example in~\cite{cuturi13} and \cite{BCNP}.

The remainder of the paper is organized as follows. Section \ref%
{sec:formulation} presents the optimal transport problem, recalls duality
results, and introduces the inverse problem of learning the parameter vector
of the transport cost under an $l_{1}$ penalization. Section \ref{sec:SISTA}
describes the SISTA algorithm, states results on its linear convergence, and
benchmarks its speed of convergence via numerical experiments. Section \ref%
{application} applies this procedure to a model of migration based on
country-specific characteristics and pairwise measures of dissimilarity
between countries. Section \ref{sec:conc} concludes the paper.

\section{Optimal transport with entropic regularization}

\label{sec:formulation}

\subsection{Problem formulation}

\label{formulation} Consider two discrete probability distributions $p$ and $%
q$ supported by $N$ points. The optimal transport problem between $p$ and $q$%
, see~\cite{villani03}, is classically defined as%
\begin{equation}
\min_{\pi \in \Pi }\sum_{1\leq i,j\leq N}\pi _{ij}c_{ij}
\label{WassersteinPrimal}
\end{equation}%
where $c_{ij}$ is the transport cost from~$i$ to $j$, and $\Pi $ is the set
of probability mass vectors $\pi $ with margins $p$ and $q$, that is%
\begin{equation}
\Pi =\left\{ \pi _{ij}\geq 0:\sum_{ j=1}^N\pi _{ij}=p_{i}\text{ and }\sum_{
i=1}^N\pi _{ij}=q_{j}\right\} .  \label{defPi}
\end{equation}

Recently, the literature (see for instance \cite{GalichonSalanie} and \cite%
{cuturi13}) has considered the entropic regularization of %
\eqref{WassersteinPrimal}, that is%
\begin{equation}
\min_{\pi \in \Pi }\sum_{1\leq i,j\leq N}\pi _{ij}c_{ij}+T\pi _{ij}\ln \pi
_{ij}  \label{RegulWasserstein}
\end{equation}%
where $T>0$ is a temperature parameter, so that $T\rightarrow 0$ recovers
the previous object. The dual version of~(\ref{RegulWasserstein}) is%
\begin{equation}
\max_{u,v}\sum_{i=1}^{N}p_{i}u_{i}+\sum_{j=1}^{N}q_{j}v_{j}-T\sum_{1\leq i,
j\leq N}\exp \left( \frac{u_{i}+v_{j}-c_{ij}}{T}\right).
\label{RegulWassersteinDualMax}
\end{equation}%
Note that by homogeneity, the solution of problem~(\ref{RegulWasserstein})
is left invariant by dividing $c$ and $T$ by the same constant. Taking this
constant to be $T$, we can without loss of generality set
\begin{equation}
T=1,  \label{TequalsOne}
\end{equation}%
which we will do in the sequel. After taking the negative, we obtain the
following dual optimization problem
\begin{equation}
\min_{u,v}\sum_{1\leq i, j\leq N}\exp \left( u_{i}+v_{j}-c_{ij}\right)
-\sum_{i=1}^{N}p_{i}u_{i}-\sum_{j=1}^{N}q_{j}v_{j}.
\label{RegulWassersteinDual}
\end{equation}

By first order conditions, the optimal $\pi$ in~(\ref{RegulWasserstein}) and
the optimal $\left( u,v\right) $ in~(\ref{RegulWassersteinDual}) are such
that
\begin{equation}
\pi _{ij}=\exp \left( u_{i}+v_{j}-c_{ij}\right) ,  \label{sinkhornBis}
\end{equation}%
where $u$ and $v$ are adjusted so that the primal constraints are satisfied,
i.e. $\pi\in \Pi $ as in~(\ref{defPi}).

The dual problem~(\ref{RegulWassersteinDual}) is a convex minimization
problem, which can be solved by gradient descent. However, it is well-known
that blockwise coordinate descent over $u$ and $v$ iteratively, a procedure
called Sinkhorn's algorithm, see \cite{PeyreCuturi} and references therein,
is a preferable alternative. Given $\left( u^{t},v^{t}\right) $, the
minimization of problem~(\ref{RegulWassersteinDual}) with respect to $u_{i}$
yields, by first order conditions,
\begin{equation}
\exp \left( u_{i}^{t+1}\right) =\frac{p_{i}}{\sum_{j=1}^{N}\exp \left(
v_{j}^{t}-c_{ij}\right) },  \label{ipfpa}
\end{equation}%
while the minimization of~(\ref{RegulWassersteinDual}) with respect to $%
v_{j} $ yields
\begin{equation}
\exp \left( v_{j}^{t+1}\right) =\frac{q_{j}}{\sum_{i=1}^{N}\exp \left(
u_{i}^{t+1}-c_{ij}\right) }.  \label{ipfpb}
\end{equation}

Alternating the minimization steps with respect to $u_{i}$ and $v_{j}$
therefore yields to alternating between the closed-form updating formulas~(%
\ref{ipfpa}) and~(\ref{ipfpb}).

\subsection{Learning the transport cost}

\label{paramCost}

We now turn to the inverse problem of recovering the transport cost $c_{ij}^\beta$
based on the observed transport plan $\hat{\pi}\in \Pi $. To achieve this, we
assume that the cost function can be represented by a linear combination of
basis functions $d^{k}$, $k\in \left\{ 1,...,K\right\} $ as
\begin{equation}
c_{ij}^{\beta }=\sum_{k=1}^{K}\beta _{k}d_{ij}^{k},  \label{paraC}
\end{equation}%
where each $d_{ij}^{k}$ is some measure of dissimilarity between $i$ and $j$%
, and $\beta =(\beta
_{1},\cdots ,\beta _{K})^{\top } $ is a parameter vector to be learned.

We present a few choices of dissimilarity measures $d_{ij}^{k}$.

\textbf{(a)}. If $x^i$ and $y^j$ are vectors of characteristics for $i$ and $%
j$, one may set $d_{ij}^{k} =\left( x^i_k-y^j_k\right) ^{2}$, where the
subscript $k$ denotes the $k$-th component of a vector.

\textbf{(b)}. Similarly, if $k=(r,s)$ stands for a pair of indices, one may
set $d_{ij}^{rs}=\left( x_{r}^{i}-y_{s}^{j}\right) ^{2}$ as is done in~\cite%
{dupuy}, in order to capture off-diagonal interactions between
characteristics. The matrix parameter $\beta _{rs}$ to be learned measures
the interaction between two distinct characteristics.

\textbf{(c)}. If, like in our application, $i$ and $j$ are countries, then $%
d_{ij}^{k}$ may be the share of inhabitants of country $i$ who do not speak
the language of country $j$. Note that this measure has no reason to be
symmetric since $d_{ij}^{k}\neq d_{ji}^{k}$ in general.

Our learning procedure is based on looking for $\beta $ such that $\pi
_{ij}^{\beta }=\exp \left( u_{i}+v_{j}-c_{ij}^{\beta }\right) $ solves the
margin constraints $\pi ^{\beta }\in \Pi $, and matches the moments of $%
d^{k} $ for $k=1,...,K$, that is
\begin{equation}
\sum_{i=1}^{N}\pi _{ij}^{\beta }=q_{j},\sum_{j=1}^{N}\pi _{ij}^{\beta
}=p_{i},\sum_{1\leq i,j\leq N}\pi _{ij}^{\beta }d_{ij}^{k}=\sum_{1\leq
i,j\leq N}\hat{\pi}_{ij}d_{ij}^{k}.  \label{foc}
\end{equation}

As shown by~\cite{GalichonSalanie}, equations~(\ref{foc}) arise as the first
order conditions of the following minimization problem
\begin{equation}
\min_{u,v,\beta }F\left( u,v,\beta \right) ,  \label{lossMin}
\end{equation}%
where%
\begin{equation}
F\left( u,v,\beta \right) :=\sum_{1\leq i,j\leq N}\exp \left(
u_{i}+v_{j}-c_{ij}^{\beta }\right) +\sum_{1\leq i,j\leq N}\hat{\pi}%
_{ij}\left( c_{ij}^{\beta }-u_{i}-v_{j}\right)  \label{deff}
\end{equation}%
is a convex function.

Now assuming that $\beta $, the parameter vector, is sparse, one way to
handle this problem is to add the $l_{1}$ penalty term in~(\ref{lossMin}),
namely considering the following problem:
\begin{equation}
\min_{u,v,\beta }\Phi \left( u,v,\beta \right) :=F\left( u,v,\beta \right)
+\gamma \left\vert \beta \right\vert _{1},  \label{LassoMLE}
\end{equation}%
where $\gamma $ is a regularization parameter. This regularization is
similar in spirit to~\cite{DGS}, who have introduced a rank regularization
technique using the nuclear norm in a problem where $\beta $ is a matrix. In
the next section, we provide the SISTA algorithm for the computation of~(\ref%
{LassoMLE}).

\section{SISTA}

\label{sec:SISTA}

\subsection{Algorithm}

The SISTA algorithm consists of the minimization of $\Phi \left( u,v,\beta
\right) $ in~(\ref{LassoMLE}) by iterating three alternating phases:

\begin{itemize}
\item an exact minimization of with respect to $u$, holding $\beta $ and $v$
constant;

\item an exact minimization of with respect to $v$, holding $\beta $ and $u$
constant;

\item a proximal gradient descent step with respect to $\beta $, holding $u $
and $v$ constant.
\end{itemize}

Each of these three phases is straightforward. The exact minimization with
respect to $u$ is done in closed-form by the updating formula~(\ref{ipfpa})
and for $v$ done by (\ref{ipfpb}). The proximal gradient descent step with
respect to $\beta $ yields%
\begin{equation*}
\beta ^{t+1}=\text{prox}_{\rho \gamma |\cdot |_{1}}\left( \beta ^{t}-\rho
\nabla _{\beta }F\left( u^{t+1},v^{t+1},\beta ^{t}\right) \right) ,
\end{equation*}%
which is explicit and whose components are given by the well-known
soft-thresholding formula:
\begin{equation*}
\text{prox}_{\rho \gamma |\cdot |}(z)=%
\begin{cases}
z-\rho \gamma & \text{if }z>\rho \gamma \\
0 & \text{if }|z|\leq \rho \gamma \\
z+\rho \gamma & \text{if }z<-\rho \gamma%
\end{cases}%
.
\end{equation*}

Combining these three steps yields SISTA, which we describe in algorithm \ref%
{proxAlgo}.

\begin{algorithm}
\caption{The SISTA algorithm}\label{proxAlgo}
\begin{algorithmic}
\Require Initial guess of parameter vector $\beta ^{0}$, of potentials $u^0$ and $v^0$, step size $\rho $, and dissimilarity measures $d_{ij}^{k}$%
\While {not converged}
	\State (Sinkhorn step). Set $c^{\beta^t}_{ij}:=\sum_{k=1}^K\beta _{k}^{t}d_{ij}^{k}$ and update:
\begin{equation*}
\left\{
\begin{array}{l}
\exp \left( u_{i}^{t+1}\right) =\frac{p_i}{\sum_{j=1}^N\exp \left(
v_{j}^{t}-c_{ij}^{\beta^t}\right) } \\
\exp \left( v_{j}^{t+1}\right) =\frac{q_j}{\sum_{i=1}^N\exp \left(
u_{i}^{t+1}-c_{ij}^{\beta^t}\right)}
\end{array}%
\right.
\end{equation*}
	\State (ISTA step). Let\ $\pi _{ij}^{\beta^t}:=\exp \left(
u_{i}^{t+1}+v_{j}^{t+1}-c_{ij}^{\beta^t}\right) $. For $k=1,\ldots, K,$\State%
\begin{equation*}
\beta ^{t+1}_k=\text{prox}_{\rho\gamma|\cdot|}\Big(\beta ^{t}_k-\rho \sum_{1\leq i,j\leq N}\Big( \hat{\pi}_{ij}-\pi _{ij}^{\beta ^{t}}
\Big) d_{ij}^{k}\Big).
\end{equation*}

\EndWhile
\Ensure $\beta$
\end{algorithmic}
\end{algorithm}

\subsection{Convergence}

We introduce two assumptions on the dissimilarity measures $d_{ij}^{k}$:
\begin{equation}
\forall k=1,\ldots ,K,\;\sum_{i=1}^{N}d_{ij}^{k}=0,\;\forall
j,\;\sum_{j=1}^{N}d_{ij}^{k}=0,\forall i,  \label{ass1}
\end{equation}%
and
\begin{equation}
\mbox{the matrices }\{d^{1},\ldots ,d^{K}\}\mbox{ are linearly independent}.
\label{ass2}
\end{equation}

Note that (\ref{ass1}) is without loss of generality, see section \ref{prelim}. In
addition, we assume
\begin{equation}
\hat{\pi}_{ij}>0,\;\forall (i,j)\in \{1,\ldots ,N\}^{2}.  \label{ass3}
\end{equation}%
We are now ready to state our main theorem of the paper:

\begin{thm}
\label{cvsistathm} \label{cvhyb} Assume (\ref{ass1})-(\ref{ass2})-(\ref{ass3}%
). The sequence $x^t:= (u^t, v^t, \beta^t)$ generated by the SISTA scheme in
algorithm \ref{proxAlgo} converges to the solution $x^*=(u^*, v^*, \beta^*)$
of (\ref{LassoMLE}) as $t\to +\infty$ provided the step $\rho>0$ is chosen
small enough. Moreover, in this case, there exists $\delta>0$ such that
\begin{equation}  \label{linearcvsista}
\Phi(x^t)-\Phi(x^*) \leq \frac{\Phi(x^0)-\Phi(x^*) }{(1+\delta)^t}, \;
\forall t\in {\mathbb{N}}.
\end{equation}
\end{thm}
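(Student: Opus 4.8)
The plan is to treat SISTA as a block descent method --- exact (coordinate) minimization in the blocks $u$ and $v$, followed by one proximal--gradient step in the block $\beta$ --- and to prove linear convergence through the two classical ingredients of the Luo--Tseng theory: a \emph{sufficient decrease} estimate showing that one full sweep lowers $\Phi$ by at least a constant times the squared step length, and a \emph{quadratic-growth (error-bound)} estimate controlling the optimality gap by the same squared step length. First I would record that $F$, hence $\Phi$, is invariant under the gauge shift $(u,v)\mapsto(u+a\mathbf 1,\,v-a\mathbf 1)$, since $\sum_{ij}\hat\pi_{ij}=1$; thus $\Phi(x^t)-\Phi(x^*)$ is well defined independently of the chosen representative, and I would fix a normalization (say $\sum_i u_i=0$) only when discussing convergence of the iterates themselves.

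The structural heart of the argument is the Hessian of $F$. A direct computation gives $\nabla^2 F=\sum_{1\le i,j\le N} m_{ij}\,w_{ij}w_{ij}^\top$, where $m_{ij}=\exp(u_i+v_j-c^\beta_{ij})>0$ and $w_{ij}=(e_i;\,e_j;\,-d_{ij})\in\R^{N+N+K}$ with $d_{ij}=(d^1_{ij},\dots,d^K_{ij})$. Hence $z=(a,b,\eta)$ lies in $\ker\nabla^2 F$ iff $a_i+b_j=\sum_k\eta_k d^k_{ij}$ for all $i,j$. Summing this identity over $i$ and over $j$ and invoking the zero-sum normalization \eqref{ass1} forces $a$ and $b$ to be constant and $\sum_k\eta_k d^k\equiv 0$, whereupon the linear independence \eqref{ass2} gives $\eta=0$; the kernel is therefore exactly the one-dimensional gauge line $\R\,(\mathbf 1,-\mathbf 1,0)$. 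Assumption \eqref{ass3} enters here as well: with $\hat\pi$ in the interior of the transport polytope its moments are interior, which makes $\Phi$ coercive transverse to the gauge line, so a minimizer $x^*$ exists, the sublevel set $\{\Phi\le\Phi(x^0)\}$ is compact modulo the gauge, $\beta^*$ is unique, and the solution set $\mathcal X^*$ is precisely the gauge orbit of $x^*$. On this compact sublevel set $\nabla^2 F$ is uniformly positive definite on the orthogonal complement of the gauge line: $F$ is strongly convex \emph{transverse to the gauge}.

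Next I would prove the per-sweep decrease. Since the $(u,u)$ block of $\nabla^2F$ is $\diag(\sum_j m_{ij})$ and the $(v,v)$ block is $\diag(\sum_i m_{ij})$, both uniformly positive definite on the sublevel set, $F$ is strongly convex in $u$ alone and in $v$ alone; the two exact (Sinkhorn) minimizations therefore lower $\Phi$ by at least $\tfrac{\mu}{2}\|u^{t+1}-u^t\|^2$ and $\tfrac{\mu}{2}\|v^{t+1}-v^t\|^2$. For the $\beta$-block, $\nabla_\beta F$ is Lipschitz with some constant $L$ on the sublevel set (its Hessian block $\sum_{ij}m_{ij}d^k_{ij}d^l_{ij}$ is bounded there), so the standard proximal-gradient descent lemma gives, for $\rho\le 1/L$, a composite decrease of at least $\tfrac1{2\rho}\|\beta^{t+1}-\beta^t\|^2$; a short bootstrapping argument shows the full sweep keeps the iterate inside $\{\Phi\le\Phi(x^0)\}$, so $L$ and $\mu$ stay uniform along the run. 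Summing the three contributions yields
\[
\Phi(x^t)-\Phi(x^{t+1})\ \ge\ c\,\|x^{t+1}-x^t\|^2,\qquad c=\min\{\mu/2,\,1/(2\rho)\}.
\]

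Finally I would establish the error bound $\Phi(x^t)-\Phi(x^*)\le \kappa\,\|x^{t+1}-x^t\|^2$ and combine. Writing the sweep as $x^{t+1}=\mathcal S(x^t)$, the displacement $x^{t+1}-x^t$ is exactly the stationarity residual of the composite map, and the quadratic-growth inequality $\Phi(x)-\Phi(x^*)\le\kappa\,\|x-\mathcal S(x)\|^2$ follows from the strong convexity of $F$ transverse to the gauge (established above) together with the polyhedrality of $\gamma|\cdot|_1$, via a Luo--Tseng-type error bound for composite objectives; the gauge direction is harmless because $\Phi$ is flat along it. Chaining the two displays gives $\Phi(x^{t+1})-\Phi(x^*)\le(1-c/\kappa)\,(\Phi(x^t)-\Phi(x^*))$, and setting $1+\delta=(1-c/\kappa)^{-1}$ (which exceeds $1$ for $\rho$ small, so that $c/\kappa\in(0,1)$) produces \eqref{linearcvsista} by induction; the quadratic-growth bound then upgrades $\Phi(x^t)\to\Phi(x^*)$ to convergence of $x^t$ to $\mathcal X^*$. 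I expect this error-bound step to be the main obstacle: one must handle simultaneously the nonsmooth $l_1$ term and the genuine (gauge) degeneracy of $F$, and it is precisely the exact identification of $\ker\nabla^2F$ with the gauge line --- relying on \eqref{ass1}--\eqref{ass2} --- together with \eqref{ass3} ensuring compactness, that makes the bound hold with a uniform $\kappa$.
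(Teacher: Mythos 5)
Your proposal follows the same two--ingredient skeleton as the paper's proof: a sufficient--decrease inequality $\Phi(x^t)-\Phi(x^{t+1})\geq c\,\Vert x^{t+1}-x^t\Vert_2^2$ from the two exact block minimizations plus the proximal descent lemma, and a reverse inequality bounding $\Phi(x^t)-\Phi(x^*)$ by $\kappa\,\Vert x^t-x^{t-1}\Vert_2^2$, chained to give the geometric rate. Your identification of $\ker \nabla^2 F$ with the gauge line via \eqref{ass1}--\eqref{ass2} is exactly the content of Lemma \ref{injlem} (the paper quotients by the gauge through the normalization $u_1=0$ rather than working transverse to it, which is cosmetic), and your use of \eqref{ass3} for coercivity matches Proposition \ref{propminifi}. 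The one genuine divergence is the error--bound step. You defer it to a Luo--Tseng--type bound for composite objectives with polyhedral nonsmooth part, and you correctly flag it as the crux; but once strong convexity modulo the gauge is in hand (the local ellipticity \eqref{eqelliptloc}), the full Luo--Tseng machinery is unnecessary. The paper instead writes the strong--convexity inequality at $x^t$ against $x^*$, inserts the subgradient $q_t\in\gamma\partial\vert\cdot\vert_1(\beta^t)$ produced by the prox step, applies Young's inequality to get \eqref{estimsis2}, and then bounds the three residuals $\nabla_u F(x^t)$, $\nabla_v F(x^t)$, $\nabla_\beta F(x^t)+q_t$ by $\Vert x^t-x^{t-1}\Vert_2$ using that the partial gradients vanish at the intermediate iterates together with the Lipschitz estimate \eqref{LipgFloc}. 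This direct route is elementary, self--contained, and yields the explicit constant $\delta=\nu^2\rho^2/(3\alpha^2\rho^2+1)$; your route is valid but imports a nontrivial external theorem whose hypotheses you would still have to verify, and it would not give the constant.

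One step you treat too lightly is keeping the iterates in the sublevel set $\{\Phi\leq\Phi(x^0)\}$. The Sinkhorn half--steps are exact minimizations, so they cannot increase $\Phi$; but the $\beta$--update is \emph{not} an exact minimization, and the proximal descent lemma you invoke requires a Lipschitz constant for $\nabla_\beta F$ that is valid on the segment joining $\beta^t$ to $\beta^{t+1}$ --- a point you do not know to lie in the sublevel set until \emph{after} the descent lemma has been applied. This circularity is precisely why the paper introduces the truncated surrogate $\tilF=\theta\circ F$ with globally Lipschitz gradient (see \eqref{propdetheta} and the choice of $A$ in \eqref{propdeA}): one runs the descent inequality for $\tilF$, which agrees with $F$ on $\{F\leq A\}$, and then rules out $F(u^{t+1},v^{t+1},\beta^{t+1})>2A$ by the contradiction following \eqref{istaineq2}. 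Your ``short bootstrapping argument'' needs to be replaced by something of this kind (or by an a priori bound on $\Vert\beta^{t+1}-\beta^t\Vert_2$ using nonexpansiveness of the prox); as written it is the one place where the proposal has a real gap rather than a stylistic difference.
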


\subsection{Proof of convergence} 
We present a full proof of Theorem \ref{cvsistathm} in this section. 
\subsubsection{Preliminaries} \label{prelim}

Our aim is to solve the convex problem 
\begin{equation}
\inf_{(u, v, \beta)\in \mathbb{R}^N \times \mathbb{R}^N\times \mathbb{R}^K}
\Phi(u, v, \beta):=G\circ \Lambda(u,v, \beta)+ \gamma \vert \beta\vert_1,
\end{equation}
where $\Lambda$ is the linear map $\mathbb{R}^N \times \mathbb{R}^N\times 
\mathbb{R}^K \to \mathcal{M}_N$ defined entrywise by 
\begin{equation}  \label{defLam}
(\Lambda(u, v, \beta))_{ij}:= u_i + v_j -c_{ij}^\beta, \; \forall (i, j),
\end{equation}
and $G$ is the (smooth, strictly convex and separable) function defined by: 
\begin{equation}
G(\lambda):= \sum_{1\leq i,j \leq N} ( \exp(\lambda_{ij})- \hat{\pi}_{ij}
\lambda_{ij}), \; \forall \lambda \in \mathcal{M}_N.
\end{equation}
Since 
\begin{equation}  \label{derivG}
\nabla G(\lambda)=\exp({\lambda})-\hat{\pi}, \; D^2 G(\lambda)={\mathrm{diag}%
}(\exp(\lambda))
\end{equation}
(where $\exp({\lambda})$ denotes the matrix with entries $\exp(\lambda_{ij})$%
, and ${\mathrm{diag}}(\exp(\lambda))$ is the $N^2\times N^2$ diagonal matrix
having entries $\exp(\lambda_{ij})$ on its diagonal), we also have (setting $%
x=(u,v, \beta)$ to shorten notations): 
\begin{equation}  \label{derivF}
\nabla F(x)=\Lambda^T (\exp{\Lambda(x)}-\hat{\pi}), \; D^2 F(x)= \Lambda^T {%
\mathrm{diag}}(\exp(\Lambda(x))) \Lambda.
\end{equation}
Properties of $F$ of course strongly rely on properties of $\Lambda$.
Recall the two assumptions (\ref{ass1}) and (\ref{ass2}): 
\begin{equation*}  
\forall k=1, \ldots,K, \; \sum_{i=1}^N d_{ij}^k=0, \; \forall j, \;
\sum_{j=1}^N d_{ij}^k=0, \forall i,
\end{equation*}
and 
\begin{equation*}  
\mbox{the matrices } \{d^1, \ldots, d^K\} \mbox{ are linearly independent}.
\end{equation*}
Note that (\ref{ass1}) is not really a restiction. Indeed, if we introduce
the new matrices $\widetilde {d}^k$ 
\begin{equation*}
\widetilde {d}_{ij}^k:= d_{ij}^k-a_i^k-b_j^k
\end{equation*}
where 
\begin{equation*}
a_i^k:=\frac{1}{N} \sum_{j=1}^N d_{ij}^k, \; b_j^k:=\frac{1}{N} \sum_{i=1}^N
d_{ij}^k-\frac{1}{N^2} \sum_{1\leq p, q \leq N} d_{p,q}^k
\end{equation*}
then, obviously $\{\widetilde {d}^1, \ldots, \widetilde {d}^K\}$ satisfies
the row and columns zero sum conditions from (\ref{ass1}). Defining 
\begin{equation*}
\widetilde {c}^\beta:=\sum_{k=1}^K \beta_k \widetilde {d}^k, \; \forall
\beta\in \mathbb{R}^K
\end{equation*}
as well as the linear map, $\widetilde {\Lambda}$: 
\begin{equation*}
\widetilde {\Lambda}(\widetilde {u}, \widetilde {v}, \beta)_{ij}:=%
\widetilde {u}_i+\widetilde {v}_j-\widetilde {c}_{ij}^\beta,
\end{equation*}
we immediately see that $\Lambda(u, v, \beta)=\widetilde {\Lambda}%
(u-\sum_{k=1}^K \beta_k a^k, v-\sum_{k=1}^K \beta_k b^k, \beta)$. In other
words, there is no loss of generality in replacing the matrices $d^k$ by the
matrices $\widetilde {d}^k$ in our minimization problem. Observing finally
that for every constant vector $m$, $\Lambda(u+m, v-m, \beta)=\Lambda(u, v,
\beta)$, we can remove this invariance by imposing a normalization conditon
on $u$ or $v$, we will impose the simplest one, namely $u_1=0$. Let us then
set 
\begin{equation}  \label{defdeE}
E:=\{(u,v, \beta)\in \mathbb{R}^N\times \mathbb{R}^N\times \mathbb{R}^K \; :
\; u_1=0\}\simeq \mathbb{R}^{2N-1+K}.
\end{equation}

\begin{lem}
\label{injlem} Assume (\ref{ass1})-(\ref{ass2}), then $\Lambda$ is an
injective map from $E$ to $\mathcal{M}_N$. In particular $\Phi$ is strictly
convex on $E$.
\end{lem}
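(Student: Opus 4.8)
The plan is to exploit the linearity of $\Lambda$: since $\Lambda$ is linear and $E$ is a linear subspace (the constraint $u_1=0$ being homogeneous), injectivity on $E$ reduces to showing that the kernel meets $E$ only at the origin. So I would begin by assuming $(u,v,\beta)\in E$ satisfies $\Lambda(u,v,\beta)=0$, that is $u_i+v_j=\sum_{k=1}^K\beta_k d_{ij}^k$ for every $(i,j)$, and aim to deduce $u=0$, $v=0$ and $\beta=0$.

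The workhorse is the zero-sum normalization \eqref{ass1}. Summing the identity over $i$ with $j$ held fixed annihilates the right-hand side (because $\sum_i d_{ij}^k=0$), leaving $\sum_i u_i+N v_j=0$; hence $v_j$ does not depend on $j$, so $v$ is a constant vector. Summing instead over $j$ with $i$ fixed and using $\sum_j d_{ij}^k=0$ shows in the same way that $u$ is constant. The normalization $u_1=0$ built into $E$ then forces that constant to vanish, giving $u\equiv 0$, and substituting back yields $v\equiv 0$. With $u$ and $v$ removed, the original identity collapses to the matrix equation $\sum_{k=1}^K\beta_k d^k=0$, whereupon the linear independence assumption \eqref{ass2} gives $\beta=0$. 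This proves that $\Lambda$ is injective on $E$.

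For the strict convexity of $\Phi$, I would write $\Phi=G\circ\Lambda+\gamma\vert\beta\vert_1$ and argue by composition. The function $G$ is strictly convex, since by \eqref{derivG} its Hessian $\mathrm{diag}(\exp(\lambda))$ is positive definite; as $\Lambda$ is linear and, by the previous paragraph, injective on $E$, any two distinct points of $E$ have distinct images under $\Lambda$, and strict convexity of $G$ then transfers to $G\circ\Lambda$ on $E$. Finally, adding the term $\gamma\vert\beta\vert_1$, which is convex (but not strictly so), preserves strict convexity, because the sum of a strictly convex function and a convex function is strictly convex.

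The one point requiring care is the translation invariance: on the full space $\mathbb{R}^N\times\mathbb{R}^N\times\mathbb{R}^K$ the map is genuinely non-injective, since $\Lambda(u+m,v-m,\beta)=\Lambda(u,v,\beta)$ for every constant $m$. It is precisely the restriction to $E$ together with the row and column zero-sum conditions \eqref{ass1} that eliminates this degeneracy, so keeping track of these two normalizations simultaneously is where the argument must be watched; everything else is a direct manipulation of marginal sums.
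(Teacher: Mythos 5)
Your proof is correct and follows essentially the same route as the paper's: compute the kernel of $\Lambda$ on $E$ by summing the identity $u_i+v_j=\sum_k\beta_k d_{ij}^k$ over rows and columns, use the zero-sum conditions \eqref{ass1} and the normalization $u_1=0$ to force $u=v=0$, and then invoke \eqref{ass2} to get $\beta=0$. Your added paragraph spelling out why injectivity of $\Lambda$ transfers strict convexity from $G$ to $\Phi$ is a correct elaboration of what the paper leaves implicit in the phrase ``in particular.''
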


\begin{proof}
Let $(u,v, \beta)\in E$ be in the null space of $\Lambda$ i.e.
\begin{equation}\label{nullspacel}
u_i+v_j=\sum_{k=1}^K \beta_k d_{ij}^k, \forall i,j.
\end{equation}
Summing over $j$ and using \pref{ass1} give $Nu_i+\sum_{j=1}^N v_j=0$ and then taking $i=1$ gives $\sum_{j=1}^N v_j=0$ hence $u=0$ and $v=0$ as well. Therefore \pref{nullspacel} becomes $\sum_{k=1}^K  \beta_k d^k=0$ so that $\beta=0$ thanks to \pref{ass2}.
\end{proof}

From now on, we will assume that (\ref{ass1})-(\ref{ass2}) hold. Then,
thanks to Lemma \ref{injlem} and formulas (\ref{derivG})-(\ref{derivF}), we
see that for every $M>0$ there exist $\nu =\nu (M)>0$ and $\alpha =\alpha
(M) $ such that whenever $x:=(u,v,\beta )$ and $y:=(u^{\prime },v^{\prime
},\beta ^{\prime })$ are in $E$ and $\max (\Vert \Lambda (u,v,\beta )\Vert
_{\infty },\Vert \Lambda (u^{\prime },v^{\prime },\beta ^{\prime })\Vert
_{\infty })\leq M$, one has the ellipticity condition 
\begin{equation}
F(x)\geq F(y)+\nabla F(y)(x-y)+\frac{\nu }{2}\Vert x-y\Vert _{2}^{2}
\label{eqelliptloc}
\end{equation}%
(one can just take $\nu (M):=e^{-M}\sigma _{\min }$ where $\sigma _{\min }>0 
$ is the smallest eigenvalue of $\Lambda ^{T}\Lambda $) as well the
Lipschitz estimate 
\begin{equation}  \label{LipgFloc}
\Vert \nabla F(x)-\nabla F(y)\Vert _{2}\leq \alpha \Vert x-y\Vert _{2},%
\mbox{ with }\alpha =e^{M}\Vert \Lambda \Vert _{2}\Vert \Lambda ^{T}\Vert
_{2},
\end{equation}
where $\Vert A\Vert _{2}$ denotes the $2$-operator norm of a matrix $A$.

Our last assumption (\ref{ass3}) is 
\begin{equation*} 
\hat{\pi}_{ij}>0 , \; \forall (i,j)\in \{1, \ldots, N\}^2.
\end{equation*}
Rewriting our initial optimization problem with the normalization constraint 
$u_1=0$ as 
\begin{equation}  \label{minifi}
\inf_{(u,v, \beta)\in \mathbb{R}^{2N-1+K}} \Phi(u, v, \beta)=G(\Lambda(u, v,
\beta))+ \gamma \vert \beta \vert_1
\end{equation}
we then have:

\begin{prop}
\label{propminifi} Under assumptions (\ref{ass1})-(\ref{ass2})-(\ref{ass3}),
we have:

\begin{itemize}
\item $\Phi$ is coercive on $\mathbb{R}^{2N-1+K}$, i.e. its sublevels are
bounded,

\item problem (\ref{minifi}) admits a unique solution $x^*:=(u^*, v^*,
\beta^*)$

\item $x^*:=(u^*, v^*, \beta^*)$ is characterized by the optimality
conditions 
\begin{equation}  \label{opticond}
\nabla_ u F(x^*)=0, \; \nabla_v F(x^*)=0, \; -\nabla_{\beta} F(x^*) \in
\gamma \partial \vert. \vert_{1} (\beta^*).
\end{equation}
\end{itemize}
\end{prop}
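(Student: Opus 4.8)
The plan is to establish the three bullets in sequence: coercivity first, since existence of a minimizer will follow from it by the direct method; uniqueness from the strict convexity already recorded in Lemma \ref{injlem}; and finally the optimality conditions (\ref{opticond}) from convex subdifferential calculus.

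\emph{Coercivity.} First I would show that $\Phi$ is coercive on $E\simeq\mathbb{R}^{2N-1+K}$. Since $\gamma\vert\beta\vert_1\geq 0$, it suffices to prove that $F=G\circ\Lambda$ is coercive, and this is where assumption (\ref{ass3}) enters decisively. The point is that each scalar map $t\mapsto e^{t}-\hat{\pi}_{ij}t$ is coercive on $\mathbb{R}$ \emph{precisely because} $\hat{\pi}_{ij}>0$: as $t\to-\infty$ the linear term $-\hat{\pi}_{ij}t\to+\infty$ dominates, while as $t\to+\infty$ the exponential dominates. Each such map attains a finite minimum, so $G$, being their sum over $(i,j)$, is bounded below and coercive on $\mathcal{M}_N$ (if $\Vert\lambda^n\Vert_\infty\to\infty$, the term along the blowing-up coordinate tends to $+\infty$ while all others stay above their finite minima). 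Finally, by Lemma \ref{injlem} the linear map $\Lambda$ is injective on $E$, so $\Vert\Lambda(x)\Vert_2^2=x^{T}\Lambda^{T}\Lambda x\geq\sigma_{\min}\Vert x\Vert_2^2$ with $\sigma_{\min}>0$ the smallest eigenvalue of $\Lambda^{T}\Lambda$ on $E$. Hence $\Vert x\Vert\to\infty$ forces $\Vert\Lambda(x)\Vert\to\infty$ and therefore $F(x)=G(\Lambda(x))\to+\infty$, giving coercivity of $\Phi$.

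\emph{Existence and uniqueness.} With coercivity established, the sublevel sets of the continuous function $\Phi$ are closed and bounded, hence compact, so $\Phi$ attains its infimum on $\mathbb{R}^{2N-1+K}$ by the Weierstrass theorem; this yields a minimizer $x^*=(u^*,v^*,\beta^*)$. For uniqueness I would invoke the strict convexity from Lemma \ref{injlem}: $G$ is strictly convex since its Hessian ${\mathrm{diag}}(\exp(\lambda))$ in (\ref{derivG}) is positive definite, $\Lambda$ is injective on $E$, so $F=G\circ\Lambda$ is strictly convex on $E$, and adding the convex term $\gamma\vert\beta\vert_1$ preserves strict convexity. A strictly convex function has at most one minimizer, so $x^*$ is unique.

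\emph{Optimality conditions.} Since problem (\ref{minifi}) is convex, $x^*$ is a minimizer if and only if $0\in\partial\Phi(x^*)$. As $F$ is differentiable, the subdifferential sum rule gives $\partial\Phi=\nabla F+\gamma\,\partial(\vert\cdot\vert_1)$, where the $l_1$ subdifferential acts only on the $\beta$-block and is $\{0\}$ on the $(u,v)$-blocks. Reading off the three blocks then produces $\nabla_u F(x^*)=0$, $\nabla_v F(x^*)=0$, and $-\nabla_\beta F(x^*)\in\gamma\,\partial\vert\cdot\vert_1(\beta^*)$, which is exactly (\ref{opticond}). The one bookkeeping subtlety I would handle carefully is the normalization $u_1=0$ built into $E$: optimality over $E$ delivers $\partial F/\partial u_i=0$ only for $i\geq 2$, since $u_1$ is not a free variable. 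To recover the full statement $\nabla_u F(x^*)=0$ I would use mass conservation: by (\ref{derivF}) the conditions $\partial F/\partial u_i=0$ ($i\geq 2$) and $\partial F/\partial v_j=0$ (all $j$) are the margin identities $\sum_j\pi^{\beta^*}_{ij}=p_i$ and $\sum_i\pi^{\beta^*}_{ij}=q_j$, and combined with $\sum_i p_i=\sum_j q_j$ they force the remaining margin $\sum_j\pi^{\beta^*}_{1j}=p_1$, i.e. $\partial F/\partial u_1=0$ as well.

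The main obstacle is the coercivity step, since it is the only place where all three assumptions truly interact: injectivity (Lemma \ref{injlem}, via (\ref{ass1})--(\ref{ass2})) converts divergence of $\Vert x\Vert$ into divergence of $\Vert\Lambda(x)\Vert$, while $\hat{\pi}_{ij}>0$ from (\ref{ass3}) is exactly what makes $G$ coercive in the direction $\lambda_{ij}\to-\infty$ — without it $t\mapsto e^t$ would stay bounded below without growing, and coercivity (hence existence) could fail. The remaining steps are routine convex analysis once the one-margin redundancy above is accounted for.
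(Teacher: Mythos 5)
Your proof is correct and follows essentially the same route as the paper: coercivity of the separable integrand $t\mapsto e^{t}-\hat{\pi}_{ij}t$ (using $\hat{\pi}_{ij}>0$) combined with the injectivity of $\Lambda$ on $E$ from Lemma \ref{injlem}, then the direct method for existence, strict convexity for uniqueness, and the subdifferential sum rule for (\ref{opticond}). The only differences are minor: the paper makes the coercivity quantitative via Young's inequality, producing the explicit lower bound (\ref{coercexpl}) that it reuses later in the convergence proof, and it does not spell out your (correct and worthwhile) observation that $\partial F/\partial u_{1}(x^{*})=0$ must be recovered from the redundancy of the margin constraints (or, equivalently, from the invariance $\Lambda(u+m,v-m,\beta)=\Lambda(u,v,\beta)$) rather than from stationarity over $E$, where $u_{1}$ is not a free variable.
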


\begin{proof}
Observe that whenever $p>0$ one has $e^t-p t\geq p-p \log(p)$ for every $t\in \R$ (Young's inequality). Obviously, if $\lambda<0$ we have $e^\lambda-\hat{\pi}_{ij} \lambda \geq \hat{\pi}_{ij} \vert \lambda \vert$. If $\lambda\geq 0$, $e^\lambda-\hat{\pi}_{ij} \lambda= \hat{\pi}_{ij} \vert \lambda \vert +e^\lambda- 2\hat{\pi}_{ij} \lambda$, using Young's inequality and the fact that $\hat{\pi}_{ij} \in [0,1]$, we get
\[e^\lambda-\hat{\pi}_{ij} \lambda \geq  \hat{\pi}_{ij} \vert \lambda \vert-2 \log(2).\]
Hence
\begin{equation}\label{coercexpl}
\Phi(u, v, \beta)\geq \sum_{1\leq i,j\leq N} \hat{\pi}_{ij} \vert \Lambda(u,v,\beta)_{ij} \vert -2N^2 \log(2)+\gamma \vert \beta\vert_1.
\end{equation}
Thanks to \pref{ass3} and the injectivity of $\Lambda$,  the coercivity of $\Phi$ follows. Since $\Phi$ is continuous, there therefore exists a solution for \pref{minifi}  which is unique by strict convexity. Since $\Phi$ is the sum of the  smooth term $F$ and the $l_1$ norm of the last component, it is easy to see that the optimality condition $0\in \partial \Phi(u^*, v^*, \beta^*)$ is exactly the system \pref{opticond}.
\end{proof}

\subsubsection{Proof of convergence}

Starting from $(u^0, v^0, \beta^0)\in \mathbb{R}^{2N-1+K}$, inductively
define the sequence $(u^t, v^t, \beta^t)$ via: 
\begin{equation}  \label{cd1}
u^{t+1}=\argmin F(., v^t, \beta^t), \; v^{t+1}=\argmin F(u^{t+1}, ., \beta^t).
\end{equation}
Note that these coordinate descent updates are explicit: 
\begin{equation*}
\exp(u_i^{t+1})=\frac{p_i}{ \sum_{j=1}^N \exp(v_j^t-c_{ij}^{\beta^t})},
\exp(v_j^{t+1})=\frac{q_j}{ \sum_{i=1}^N \exp(u_i^{t+1}-c_{ij}^{\beta^t})}.
\end{equation*}

We now consider the SISTA hybrid method where the updates for $\beta$ are
given by the ISTA algorithm (see [1]) with a constant step $\rho>0$: 
\begin{equation}  \label{ista}
\beta^{t+1}=\mathop{\mathrm{prox}}\nolimits_{\rho \gamma \vert .\vert_{1}} %
\Big (\beta^t-\rho \nabla_\beta F(u^{t+1}, v^{t+1}, \beta^t)\Big)
\end{equation}
i.e. $\beta^{t+1}$ is obtained by minimizing with respect to $\beta\in 
\mathbb{R}^K$ 
\begin{equation}  \label{prox}
\beta \mapsto \rho \gamma \vert \beta \vert_{1} + \frac{1}{2} \Vert
\beta-(\beta^t-\rho \nabla_\beta F(u^{t+1}, v^{t+1}, \beta^t))\Vert_2^2.
\end{equation}
Note that the solution of (\ref{prox}) is explicit and given by a well-known
thresholding formula. So all the steps for algorithm (\ref{cd1})-(\ref{ista}%
) are totally explicit. The step size $\rho$ has to be chosen in an
appropriate way to ensure convergence.
Setting $C=\Phi(u^0, v^0, \beta^0)$, thanks to \pref{coercexpl} we have
\[\Phi(u, v, \beta) \leq C \Rightarrow \vert \beta \vert_1 \leq R:=\frac{C+ 2N^2 \log(2)}{\gamma}. \]
Defining $A:=C+ 2N^2 \log(2)+1$ we thus have
\begin{equation}\label{propdeA}
A> \gamma R, \mbox{ and  $F\leq A$ whenever $\Phi \leq C$}.
\end{equation}
Now let $\theta \in C^{\infty}(\R, \R)$ be a nondecreasing function such that
\begin{equation}\label{propdetheta}
\theta(t)=t \mbox{ when $t\leq A$}, \; \theta(t)\geq t \mbox{ when $t\in [A, 2A]$} \mbox{ and } \theta(t)=2A \mbox{ when $t\geq 2A$}.
\end{equation}
The function  $\tilF:=\theta \circ F$ is nonconvex but its gradient is globally Lipschitz. Let $\alpha>0$ be a Lipschitz constant of $\nabla \tilF$ (and of $\nabla F$ on $F\leq A$), and now take our step $\rho$ so as to satisfy
\begin{equation}\label{choicero}
0 <\rho \leq \frac{1}{\alpha}.
\end{equation}
Note that since $\nabla \tilF$ is $\alpha$-Lipschitz, we have
\begin{equation}\label{semiconcave}
\tilF(x) \leq \tilF(y)+ \nabla \tilF(y)\cdot(x-y)+\frac{\alpha}{2} \Vert x-y\Vert^2_2, \; \forall (x,y)\in E^2.
\end{equation}
Let us now show inductively that  for $\rho$ satisfying \pref{choicero}, the iterates \pref{cd1}-\pref{ista} remain in the sublevel set $\Phi \leq C$. Of course, if $\Phi(u^t, v^t, \beta^t)\leq C$ then by \pref{cd1}, $C \geq \Phi(u^{t+1}, v^t, \beta^t)\geq \Phi(u^{t+1}, v^{t+1}, \beta^t)$. Proving that $\Phi(u^{t+1}, v^{t+1}, \beta^{t+1})\leq C$ requires a little more work. First note that 
\[F(u^{t+1}, v^{t+1}, \beta^t)=\tilF(u^{t+1}, v^{t+1}, \beta^t) \mbox{ and } \nabla F(u^{t+1}, v^{t+1}, \beta^t)=\nabla \tilF(u^{t+1}, v^{t+1}, \beta^t).\]
Now, it follows from \pref{ista} that $\beta^{t}-\beta^{t+1}-\rho \nabla_{\beta} F(u^{t+1}, v^{t+1}, \beta^t) \in \rho \gamma \partial (\vert . \vert_{1})(\beta^{t+1})$, hence
\begin{equation}\label{istaineq1}
\gamma \vert \beta^{t} \vert_{1} \geq \gamma \vert \beta^{t+1} \vert_{1}+ \frac{1}{\rho} \Vert \beta^{t+1}-\beta^t\Vert^2_2 - \nabla_\beta F(u^{t+1}, v^{t+1}, \beta^t) \cdot(\beta^t-\beta^{t+1})
\end{equation}
but thanks to \pref{semiconcave} we also have
\[\begin{split}
F(u^{t+1}, v^{t+1}, \beta^t) &\geq \tilF(u^{t+1}, v^{t+1}, \beta^{t+1})\\ &+ \nabla_\beta F(u^{t+1}, v^{t+1}, \beta^t) \cdot(\beta^t-\beta^{t+1})-\frac{\alpha}{2} \Vert \beta^{t+1}-\beta^t\Vert^2_2.
\end{split}\]
Summing this inequality with \pref{istaineq1} and using \pref{choicero}, we thus get
\begin{equation}\label{istaineq2}
\Phi(u^{t+1}, v^{t+1}, \beta^t) \geq \tilF(u^{t+1}, v^{t+1}, \beta^{t+1})+\gamma \vert \beta^{t+1} \vert_{1} + \frac{\alpha}{2} \Vert \beta^{t+1}-\beta^t\Vert^2_2.
\end{equation}
If $F(u^{t+1}, v^{t+1}, \beta^{t+1}) \leq 2A$ then by \pref{propdetheta} we have $\tilF(u^{t+1}, v^{t+1}, \beta^{t+1}) \geq F(u^{t+1}, v^{t+1}, \beta^{t+1})$ and therefore $\Phi(u^{t+1}, v^{t+1}, \beta^{t+1}) \leq C$. If, on the contrary, $F(u^{t+1}, v^{t+1}, \beta^{t+1}) > 2A$ then \pref{istaineq2} would imply that $2A \leq F(u^{t+1}, v^{t+1}, \beta^t) +\gamma \vert \beta^t  \vert_{1}\leq A + \gamma R$ which contradicts our choice of $A$ in \pref{propdeA}. This shows that
\begin{equation}\label{istaineq3}
\Phi(u^{t+1}, v^{t+1}, \beta^t) \geq   \Phi(u^{t+1}, v^{t+1}, \beta^{t+1}) + \frac{\alpha}{2} \Vert \beta^{t+1}-\beta^t\Vert^2_2
\end{equation}
and in particular this proves the desired conclusion that $ \Phi(u^{t+1}, v^{t+1}, \beta^{t+1})\leq C$.  The iterates of SISTA therefore remain bounded and the value of  $\Phi$ along these iterates converges monotonically. In particular, these iterates remain in a ball where both the uniform ellipticity condition \pref{eqelliptloc} and the Lipschitz estimate \pref{LipgFloc} on $\nabla F$ are satisfied. Using \pref{eqelliptloc} together with $\nabla_u F(u^{t+1}, v^{t}, \beta^t)=0$ , $\nabla_v F(u^{t+1}, v^{t+1}, \beta^t)=0$, we get%
\begin{eqnarray*}
F(u^{t},v^{t},\beta ^{t})-F(u^{t+1},v^{t},\beta ^{t}) &\geq &\frac{%
\nu }{2} \Vert u^{t+1}-u^{t}\Vert _{2}^{2} \\
F(u^{t+1},v^{t},\beta ^{t})-F(u^{t+1},v^{t+1},\beta ^{t})&\geq &%
\frac{\nu }{2} \Vert v^{t+1}-v^{t}\Vert_{2}^{2}.
\end{eqnarray*}%
Summing and using $F(u^t, v^t, \beta^t)-F(u^{t+1}, v^{t+1}, \beta^t)=\Phi(u^t, v^t, \beta^t)-\Phi(u^{t+1}, v^{t+1}, \beta^t)$, we deduce
\begin{equation}\label{telfi1sista}
\Phi(u^t, v^t, \beta^t)-\Phi(u^{t+1}, v^{t+1}, \beta^t)\geq \frac{\nu}{2} ( \Vert u^{t+1}-u^t \Vert_2^2 +   \Vert v^{t+1}-v^t \Vert_2^2).
\end{equation}
Together with \pref{istaineq3}, this gives
\begin{equation*}
\Phi(u^t, v^t, \beta^t)-\Phi(u^{t+1}, v^{t+1}, \beta^{t+1}) \geq \frac{\nu}{2} ( \Vert u^{t+1}-u^t \Vert_2^2 +   \Vert v^{t+1}-v^t \Vert_2^2) + \frac{\alpha}{2} \Vert \beta^{t+1}-\beta^t \Vert_2^2.
\end{equation*}
Setting $A_t:=\Phi(x^t)-\Phi(x^*)$, using that $\alpha \geq \nu$, we thus have:
\begin{equation}\label{estimsis1}
A_{t-1} -A_{t} \geq \frac{\nu}{2} \Vert x^t-x^{t-1}\Vert_2^2,  \; \forall t\geq 1.
\end{equation}
Let us now bound $A_t$ from above. By construction of $\beta^t$ by proximal gradient descent, we have
\begin{equation*}
 q_t:=\frac{\beta^{t-1}-\beta^t}{\rho}-\nabla_{\beta} F(u^t, v^t, \beta^{t-1}) \in \gamma \partial (\vert . \vert_{1}) (\beta^t).
  \end{equation*}
 Using  \pref{eqelliptloc} and the fact that $q_t \in \gamma \partial( \vert . \vert_{1}) (\beta^t)$, we thus obtain
\[\begin{split}
\Phi(x^*)&\geq  \Phi(x^t) + \nabla_u F(x^t) \cdot (u^*-u^t)+\frac{\nu}{2} \Vert u^t-u^*\Vert_2^2\\
&+ \nabla_v F(x^t) \cdot (v^*-v^t)+\frac{\nu}{2} \Vert v^t-v^*\Vert_2^2\\
&+(\nabla_\beta  F(x^t) +  q_t) \cdot (\beta^*-\beta^t)+\frac{\nu}{2} \Vert \beta^t-\beta^*\Vert_2^2.
\end{split}\]
Using Young's inequality: $q\cdot z +\frac{\nu}{2} \Vert z \Vert_2^2 \geq -\frac{1}{2\nu} \Vert q \Vert_2^2$,  this yields
\begin{equation}\label{estimsis2}
A_t \leq \frac{1}{2\nu} \Big(  \Vert \nabla_u F(x^t)\Vert_2^2+  \Vert \nabla_v F(x^t)\Vert_2^2 +  \Vert \nabla_\beta F(x^t) +q_t\Vert_2^2 \Big).
\end{equation}
Now since $\nabla_u F(u^t, v^{t-1}, \beta^{t-1})=0$ and $\nabla_v F(u^t, v^{t}, \beta^{t-1})=0$,  thanks to \pref{LipgFloc}, we have  
\[ \Vert \nabla_u F(x^t)\Vert_2^2+  \Vert \nabla_v F(x^t)\Vert_2^2 \leq  2\alpha^2 \Vert x^t-x^{t-1}\Vert^2_2.\]
 Thanks  to \pref{LipgFloc}, the monotonicity of $\nabla_{\beta} F(u^t, v^t, .)$ and the definition of $q_t$, we have
\[\begin{split} 
\Vert \nabla_\beta F(x^t) +q_t\Vert_2^2 &= \bigg\Vert \frac{\beta^{t-1}-\beta^t}{\rho}+  \nabla_\beta F(x^t)-\nabla_{\beta} F(u^t, v^t, \beta^{t-1})\bigg\Vert^2_2\\
&\leq  ( \rho^{-2}+\alpha^2) \Vert \beta^{t-1}-\beta^t\Vert_2^2\\
& + \frac{2}{\rho} (\beta^{t-1}-\beta^t)\cdot (\nabla_\beta F(x^t)-\nabla_{\beta} F(u^t, v^t, \beta^{t-1}))\\
& \leq ( \rho^{-2}+\alpha^2)  \Vert \beta^{t-1}-\beta^t\Vert_2^2.
\end{split}\]
Using these inequalities in \pref{estimsis2}, we deduce that
\begin{equation}\label{estimsis3}
A_t \leq \frac{3 \alpha^2+ \rho^{-2}}{2 \nu}\Vert x^t-x^{t-1}\Vert^2_2,
\end{equation}
which, combined with \pref{estimsis1}, gives the desired linear convergence rate:
\[A_t \leq \frac{A_{t-1}}{1+\delta} \mbox{ with } \delta= \frac{\nu^2 \rho^2}{ 3\alpha^2 \rho^2+1}.\]

\begin{rem}
Since $\Vert x^t-x^{t+1}\Vert_2 \leq \sqrt{ \frac{{2(\Phi(x^t)-\Phi(x^*))}}{%
\nu}}$, linear convergence of $\Phi(x^t)-\Phi(x^*)$ together with the
triangle inequality straightforwardly gives $\Vert x^t-x^{*}\Vert_2=
O((1+\delta)^{-\frac{t}{2}})$.
\end{rem}

\subsubsection{Discussion}
We point out a few remarks that are useful in applying SISTA\ in practice.

\begin{itemize}
\item It is clear from the proof above that using varying steps $\rho_t$
that are bounded away from 0 and satisfy \eqref{choicero} leads to the same
convergence results, which also allows in practice to use line search
methods.

\item Assumption \eqref{ass3} may look restrictive in some applications.
However, all of our analysis can be generalized to cases where some of the $%
\hat{\pi}_{ij}$ are zero. In those cases, setting $I^{+}:=\{(i,j):\hat{\pi}%
_{ij}>0\}$, $F$ should just be replaced by the corresponding sum over $I^{+}$%
. In the context of migration we study in our application, for example, the
number of migrants from country $i$ to country $j$ is only defined if $i\neq
j$.

\item Our method also applies to the case where one imposes sign constraints
on some of the $\beta_k$'s. Indeed, the corresponding proximal operator is
explicit as well. More generally, SISTA can be used for any \emph{simple}
(i.e. having a closed-form prox) penalization, for instance, the group lasso.
\end{itemize}

\subsection{Numerical simulations}

We demonstrate the fast convergence of SISTA in practice on simulated
examples varying $K$ and $N$, the dimensionality of $\beta $ and size of the
margins of $\pi $, respectively. We compare against two other algorithms:
ISTA and coordinate descent. In ISTA, we perform gradient descent on $u$, $v$
(as the objective function is smooth for these parameters), and ISTA on $%
\beta $, which is the only part of the variables subject to a nonsmooth
penalization. For coordinate descent, we apply Sinkhorn minimization to $u$,
$v$ and univariate coordinate descent (using the bisection method) to each
component of $\beta $.

We generate each $d_{ij}^{k}$ from an i.i.d. standard normal distribution.
We additionally draw each $\hat{\pi}_{ij}$ from an i.i.d. standard log
normal distribution. For each problem size, we choose $\gamma$ such that the
sparsity level, defined as the number of non-zero $\beta$ components divided
by $K$, is at 0.05 or 0.1. We run SISTA on the simulated data with high
precision to obtain $\left( u^{\ast },v^{\ast },\beta ^{\ast }\right) $ and
then plot $\left\vert \Phi \left( u^{t},v^{t},\beta ^{t}\right) -\Phi \left(
u^{\ast },v^{\ast },\beta ^{\ast }\right) \right\vert $ against $T\left(
t\right) $, the computation time at iteration $t$ in seconds, in a log-log
plot, as illustrated in figure \ref{simulations}.

\begin{figure}[t]
\centering
\begin{subfigure}[t]{0.26\textwidth}
\centering
\includegraphics[width=\linewidth]{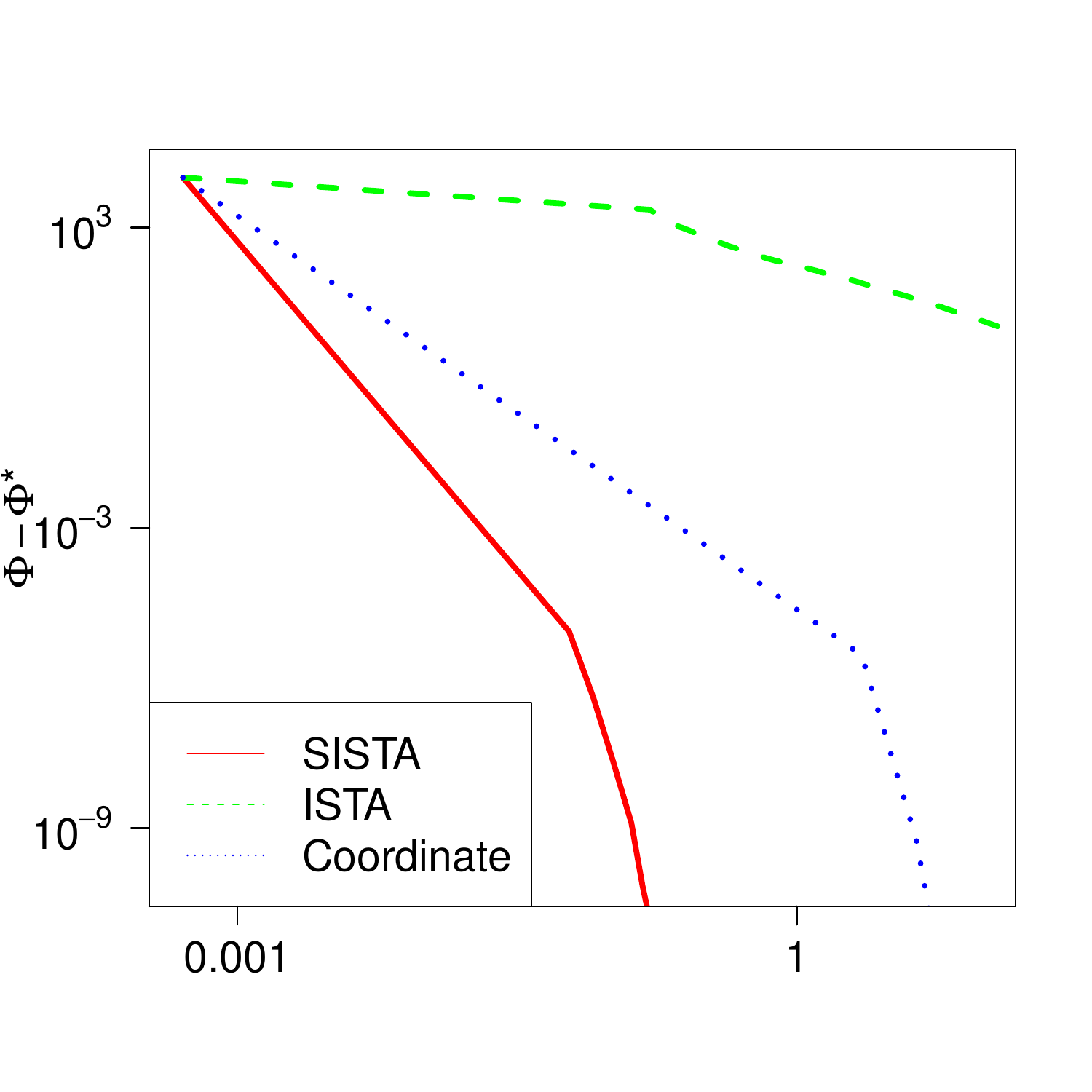}
\end{subfigure}\hspace{-0.15in} \vspace{-0.3in}
\begin{subfigure}[t]{0.26\textwidth}
\centering
\includegraphics[width=\linewidth]{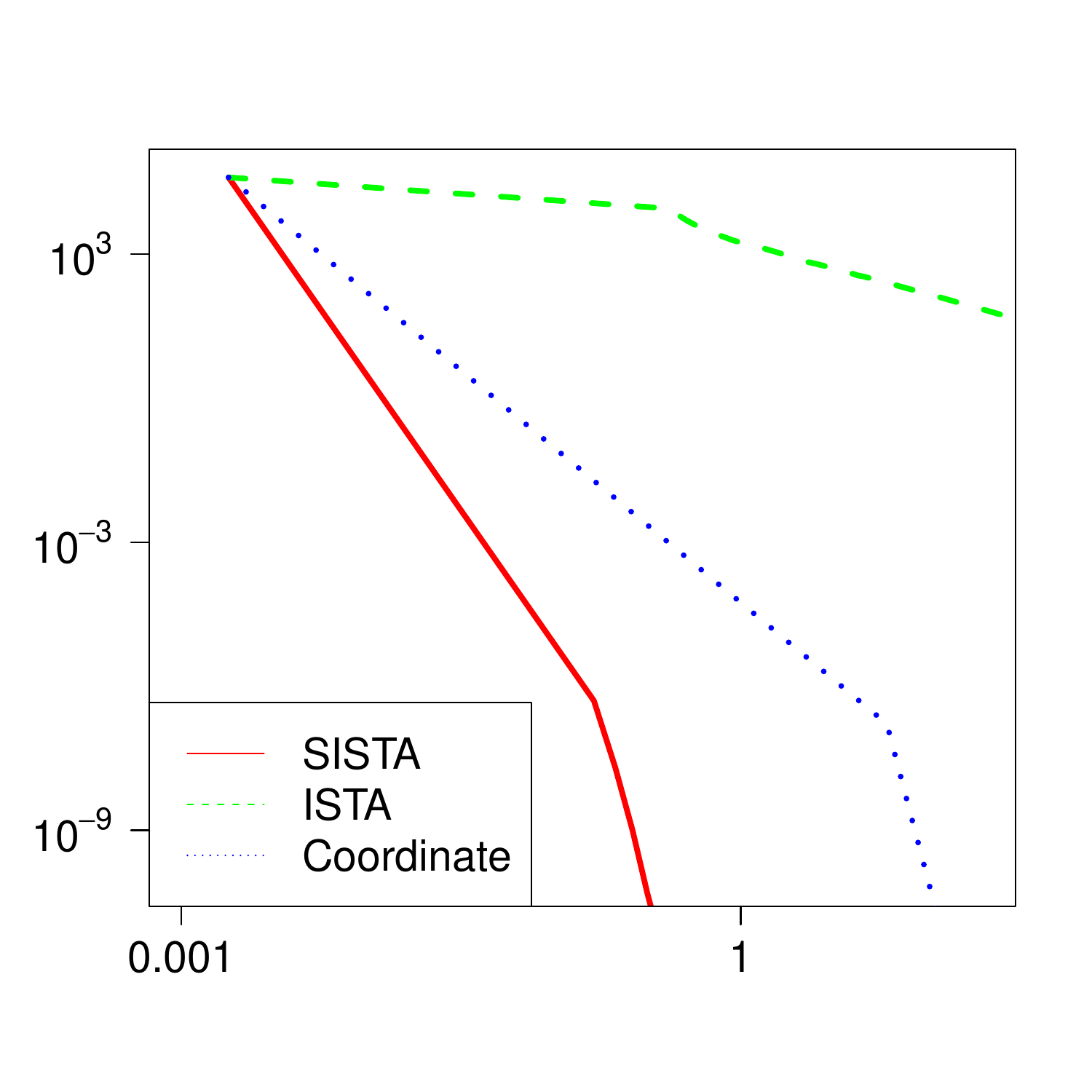}
\end{subfigure}\hspace{-0.15in}
\begin{subfigure}[t]{0.26\textwidth}
\centering
\includegraphics[width=\linewidth]{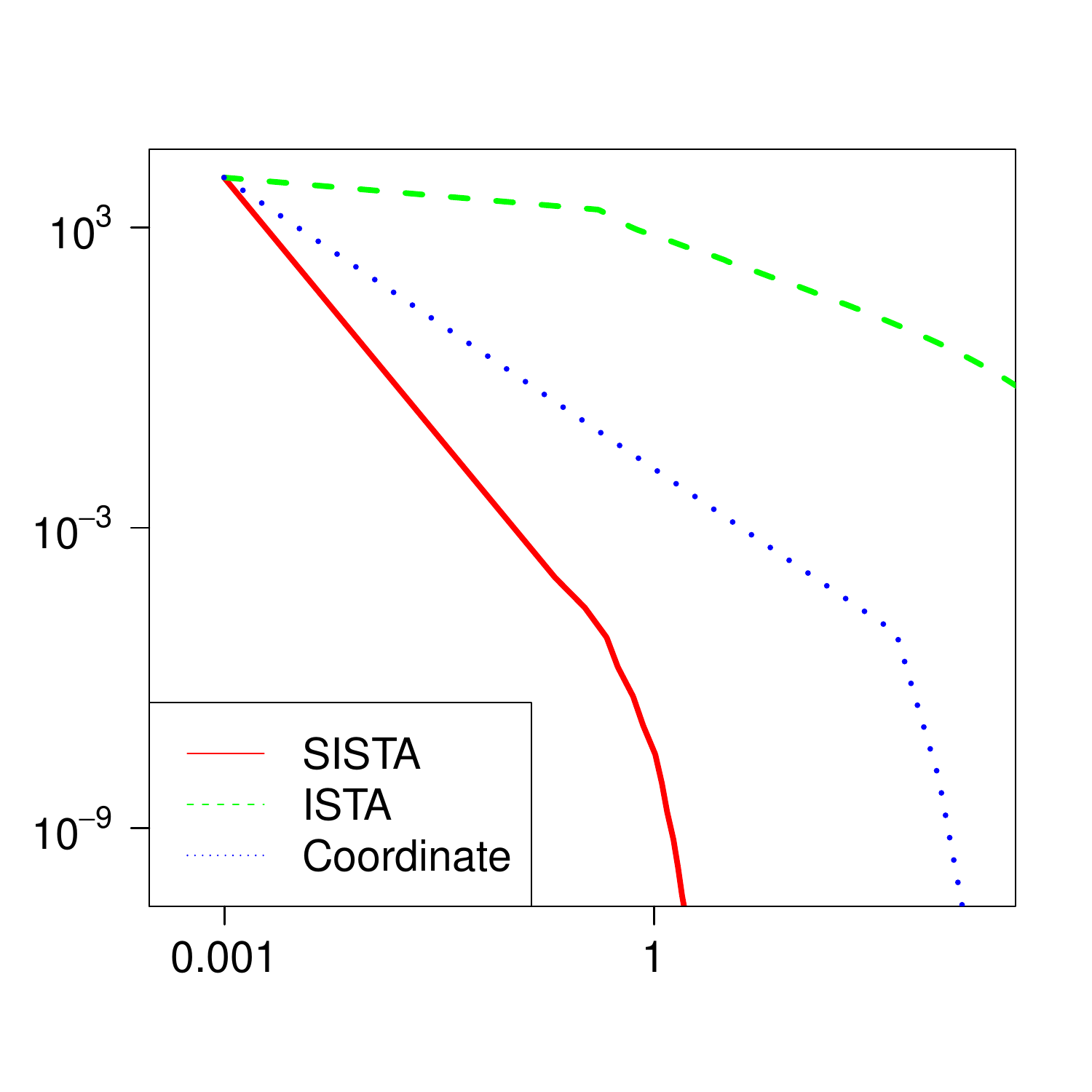}
\end{subfigure}\hspace{-0.15in}
\begin{subfigure}[t]{0.26\textwidth}
\centering
\includegraphics[width=\linewidth]{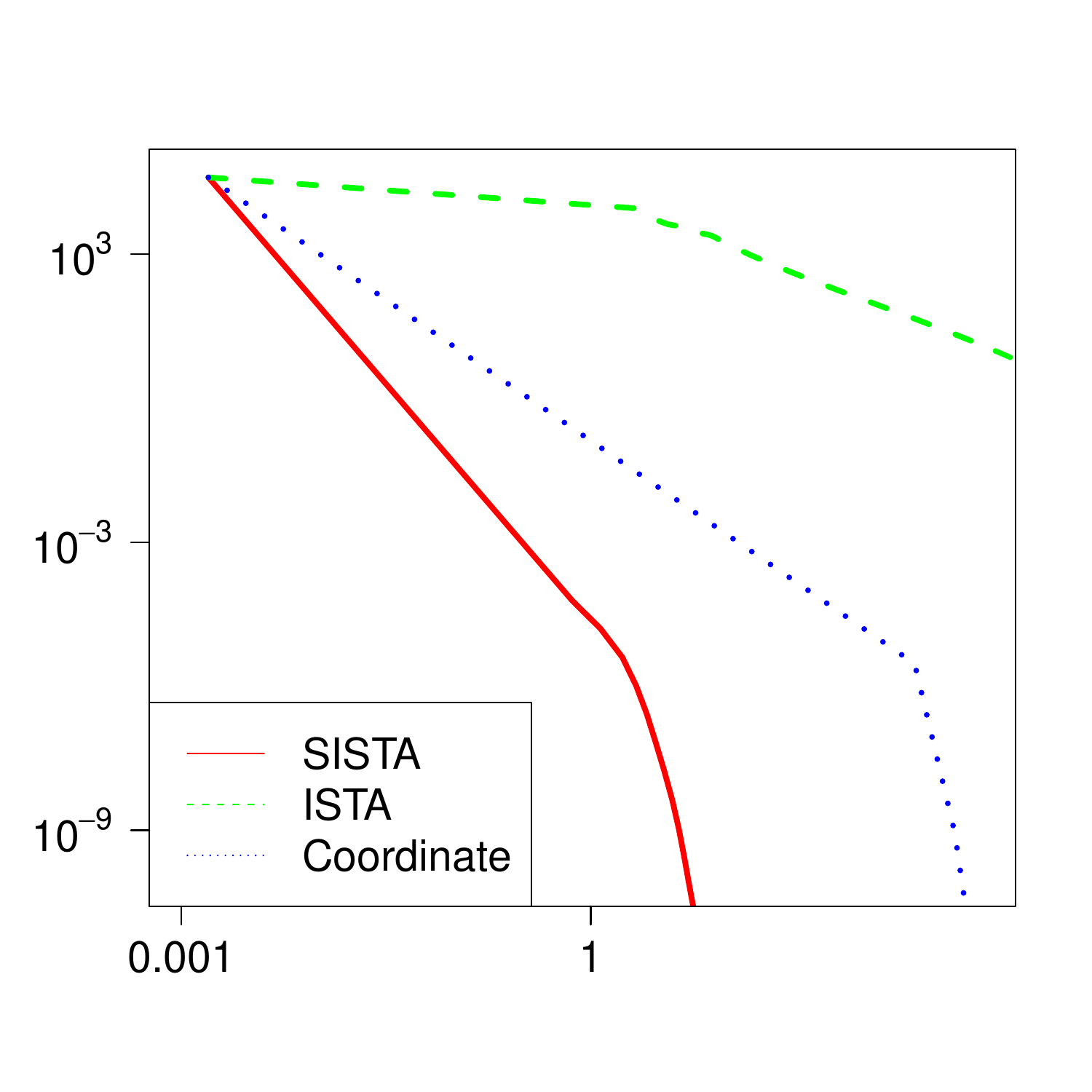}
\end{subfigure}
\begin{subfigure}[t]{0.26\textwidth}
\centering
\includegraphics[width=\linewidth]{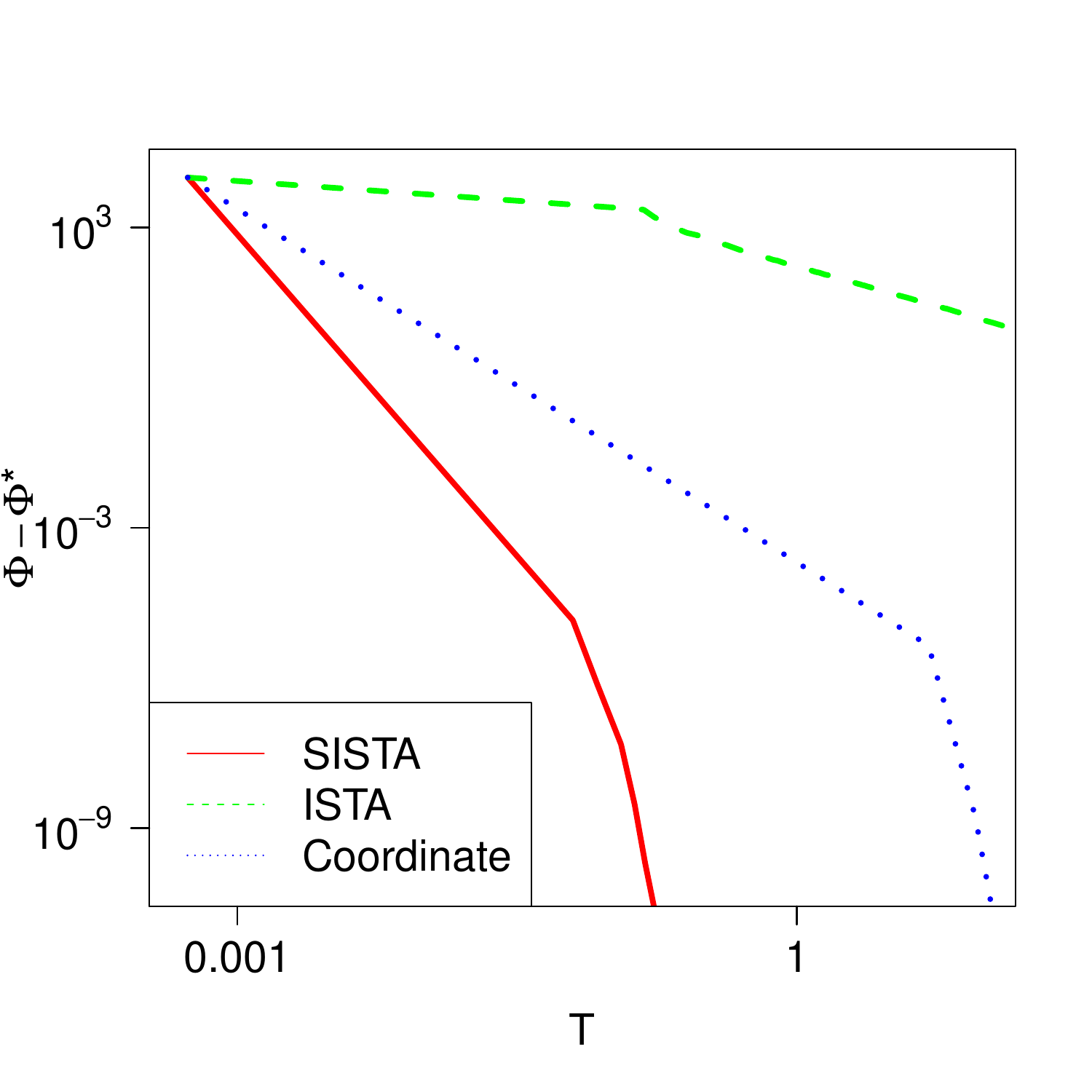}
\caption{$K=100, N=100$}
\end{subfigure}\hspace{-0.15in}
\begin{subfigure}[t]{0.26\textwidth}
\centering
\includegraphics[width=\linewidth]{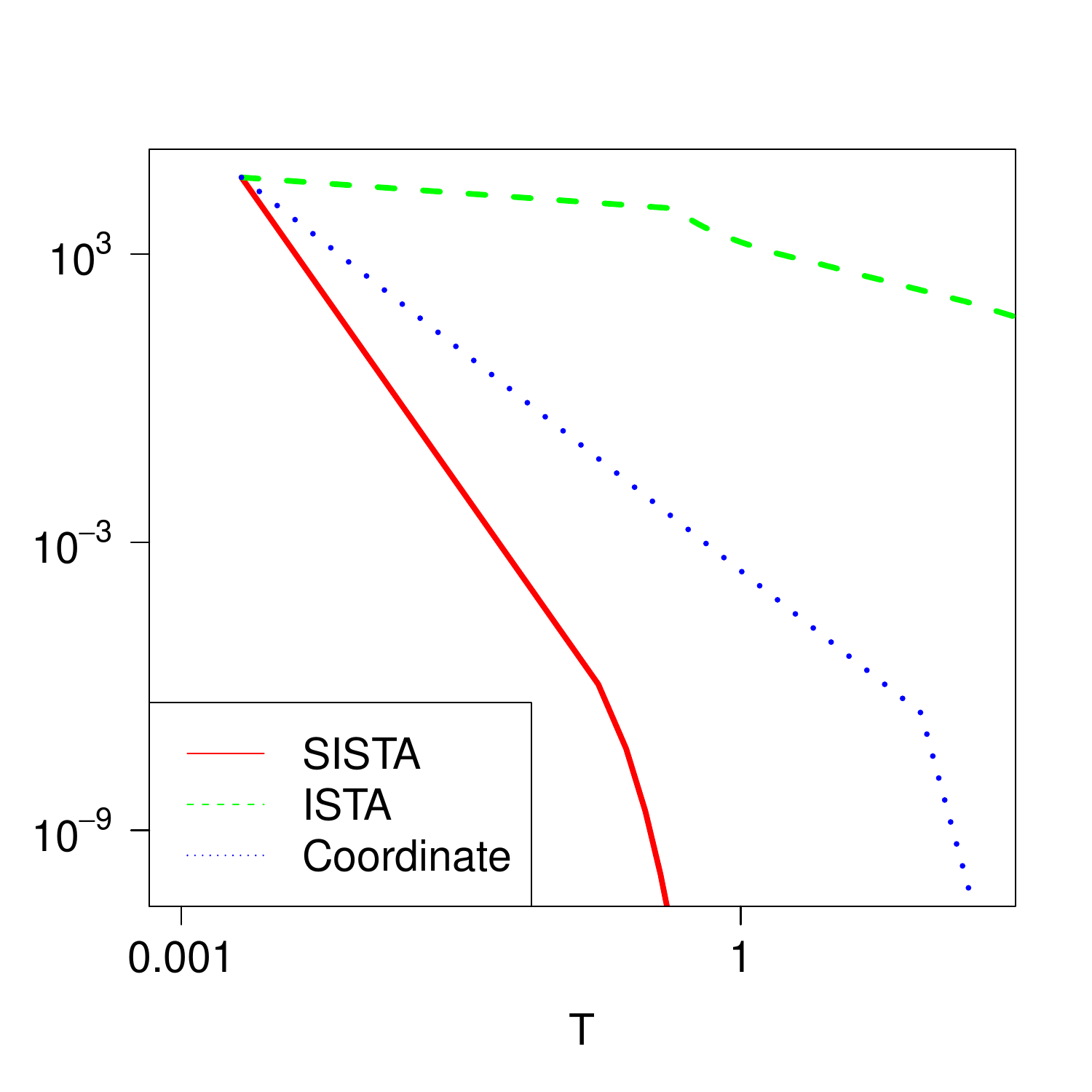}
\caption{$K=100, N=200$}
\end{subfigure}\hspace{-0.15in}
\begin{subfigure}[t]{0.26\textwidth}
\centering
\includegraphics[width=\linewidth]{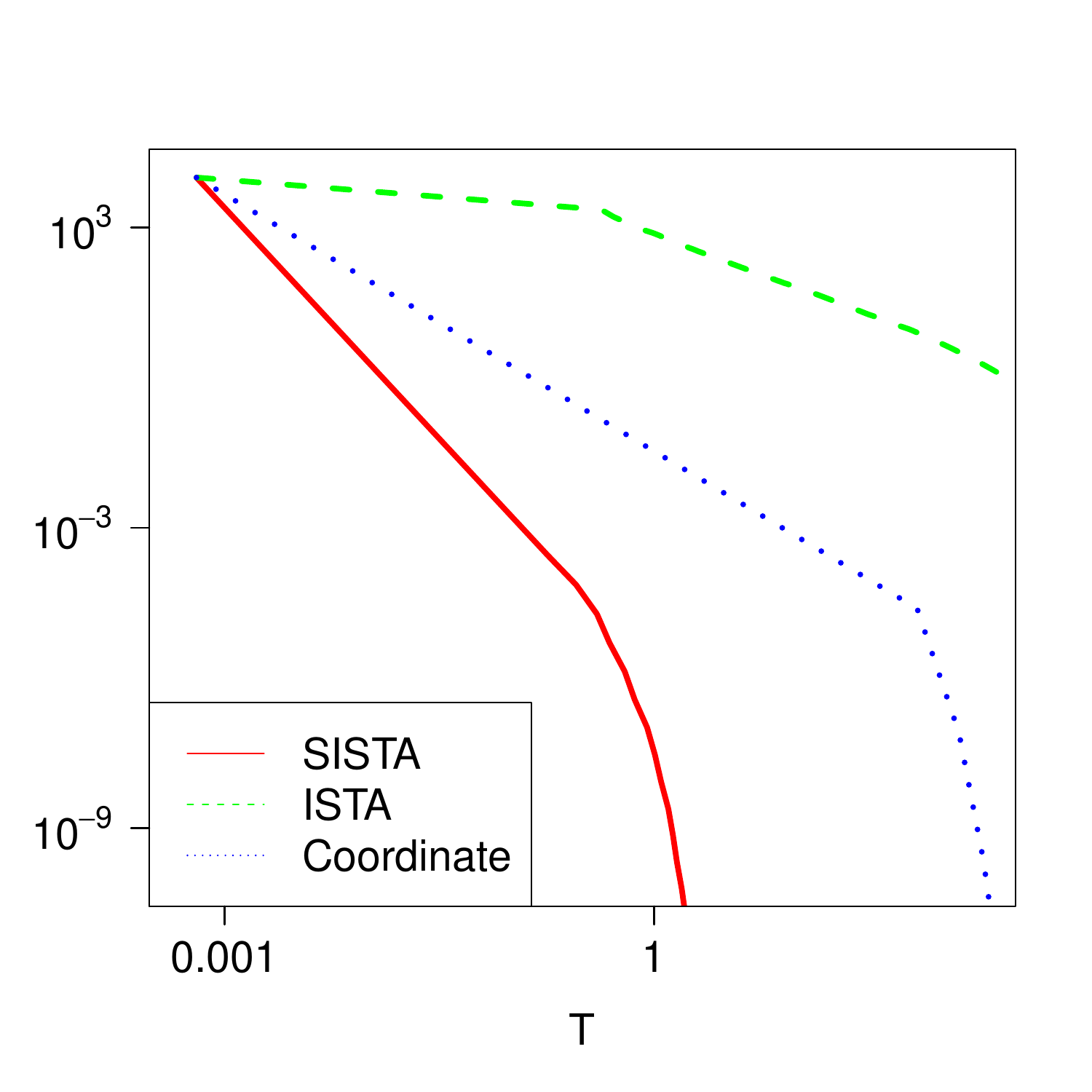}
\caption{$K=500, N=100$}
\end{subfigure}\hspace{-0.15in}
\begin{subfigure}[t]{0.26\textwidth}
\centering
\includegraphics[width=\linewidth]{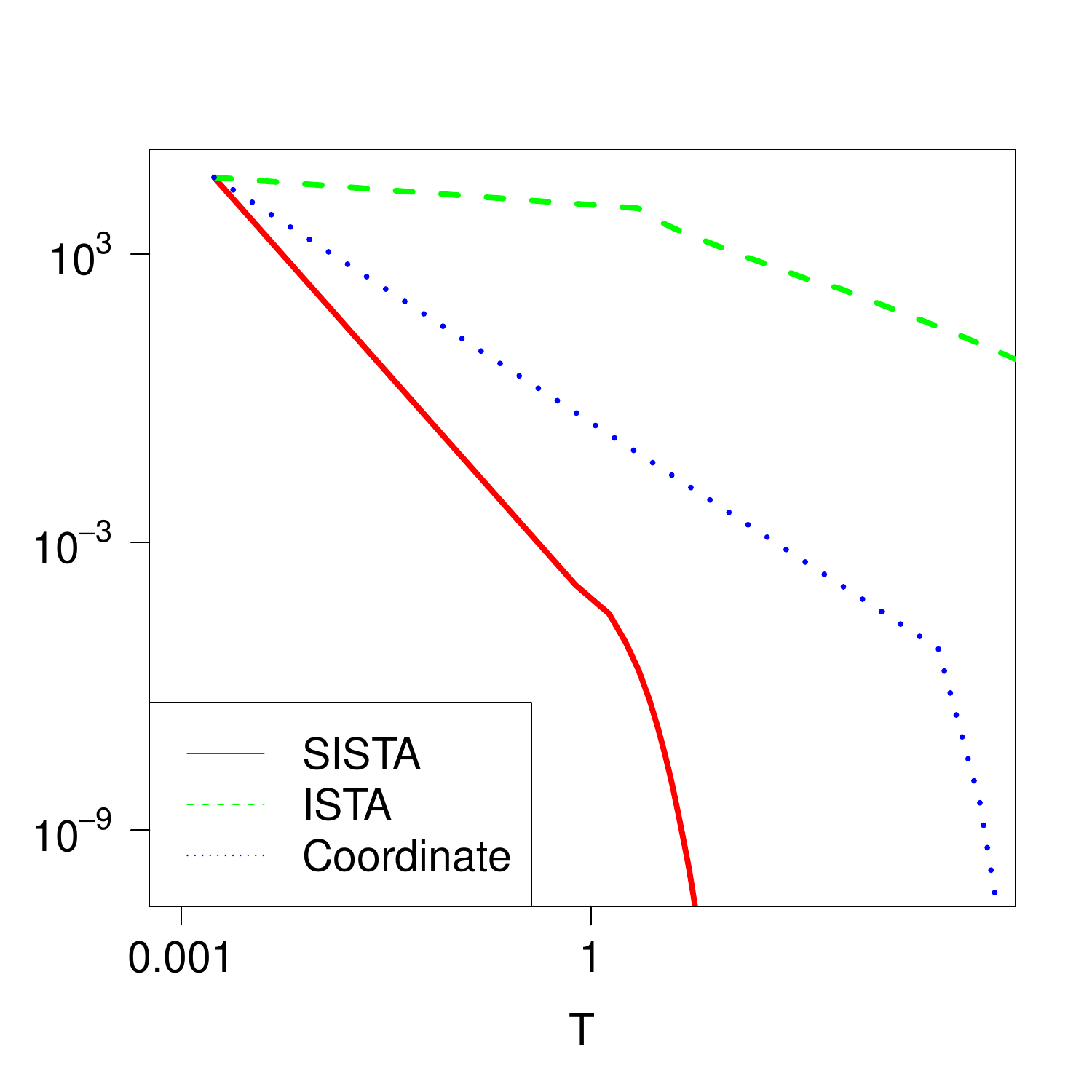}
\caption{$K=500, N=200$}
\end{subfigure}
\caption{Comparison of SISTA, coordinate descent, and ISTA for the specified
problem sizes. The top row has sparsity level $= 0.05$ and the bottom row
has sparsity level $= 0.1$.}
\label{simulations}
\end{figure}

For comparability, the three methods are run using the same initial estimate
of the variables, in practice by setting them all to be zero. We can see
from figure \ref{simulations} that SISTA runs an order of magnitude faster
than the two pure methods it is a hybrid of. The ISTA method has two major
drawbacks compared to the other two. First, a proper initialization of $u,v$
is crucial to ensure a quick convergence of gradient based approaches.
Second, a step size has to be set for $u,v$ in addition to $\beta $, which
adds extra complexities to the algorithm as they do not necessarily have the
same scale as $\beta $. The limitations of coordinate descent are also
straightforward to see. As the dimensionality of $\beta $ increases, which
is common in practical applications, the cost of coordinate descent
increases dramatically, since the algorithm needs to evaluate the gradient
after each component's update, which is time-intensive.
\section{Application to migration flows\label{sec:appli}}

\label{application}

\subsection{Literature}
Our goal in this section is to predict migration flows between countries. In
this application, $\hat{\pi}_{ij}$ is the probability that a migrant drawn
from the overall population of migrants has origin country $i$ and
destination country $j$.

The recent compilation of country-to-country migration data (see \cite%
{Ozden11} for instance) has initiated a fast growing literature in economics
whose aim is to predict bilateral migration flows using measures of
dissimilarity between an origin and a destination country (see \cite{Beine16}
for a recent critical review). The dissimilarity measures are built to
capture various dimensions of attractiveness of a destination country for
migrants from a given origin country, which can be classified into two broad classes:
those that are constructed using country-specific characteristics such as Examples (a) and (b) in section \ref{paramCost} and pairwise measures such as Example (c).

The current literature has only considered a limited number of dissimilarity
measures so far -- a minimum of five in \cite{Beine15} and up to a maximum
of eight in \cite{Ramos17}. The considered ones generally include
geographic distances, economic gaps, differences in immigration policy, and
cultural differences (see \cite{Beine15, Beine16b, Belot12, Grogger11,
Ramos17}). The parameters are estimated using
classical non-linear panel data techniques, where the dependent variable is the
logarithm of the bilateral migration flow, and the dissimilarity measures act as
independent variables. Countries of origin and destination fixed-effects are
usually included in the regression to account for the potential bias due to
the fact that the attractiveness of a destination country is relative to
that of alternative destination countries, the so-called \textquotedblleft
multilateral resistance.\textquotedblright

However, the relatively small number of dissimilarity measures studied in
this literature is in sharp contrast with the fact that countries differ
along many dimensions. A recent effort from various organizations including
the World Bank Group, the Centre d'\'{E}tudes Prospectives et d'Informations
Internationales (CEPII), Freedom House, has greatly increased the range
of measures available for countries. Combining these various sources of data
enables one to distinguish more than $200$ countries along over $100$ dimensions,
significantly augmenting the size of dissimilarity measures from five or eight.

While this offers scope for improvement in the prediction of bilateral
migration flows, it also raises the questions of how to select the most relevant
measures of dissimilarity and how to estimate their corresponding parameters. The methodology outlined in the previous section contributes to this literature by providing a solution to them.

\subsection{Data}

Our application requires access to a working dataset containing information
about bilateral migration flows. Each unit in our dataset is an origin-destination country pair. We compile the dataset using five different sources, which gives us over 40,000 observations of bilateral migration flows and more than 100 measures of dissimilarity, including nine pairwise measures.

We use data about bilateral migration flows between 1990 and 2000. These data are obtained in two steps. First, we
obtain data about migration stocks in 1990 and 2000 from the dataset compiled by \cite{Ozden11}, which were collected using census records from 226 countries of origin
and destination. Second, we follow the literature (e.g. \cite{Beine16b}) and calculate
bilateral migration flows between 1990 and 2000 as the change in bilateral stocks of migrants
between the two decades.

\cite{Beine16b} argues that the existence of a social network at destination, defined as the stock of migrants from the same origin, explains the size and the patterns of international migration flows. We follow \cite{Beine16b} and use the stock of migrants from origin country $i$ in destination country $j$ in 1990 as a proxy measure of the network of migrants from $i$ in $j$  in 2000.

We obtain the second set of dissimilarity measures using the data compiled by CEPII and documented in \cite{Mayer11}. These data cover 223 countries and contain eight pairwise measures of dissimilarity, including the geographic distance between two countries, whether they are contiguous, and whether they have had a colonial link after 1945.










We incorporate three additional sources to append information about country-specific
characteristics covering a vast range of topics.

From the World Development Indicators (WDI), compiled by the World Bank, we
obtain 73 variables containing information about economic development
(GDP, income, unemployment, inflation, human capital), demography, health
(life expectancy, access to improved sanitation facilities), and environment (climate, pollution) for 217 countries.

From the Worldwide Governance Indicators (WGI), also compiled by the World Bank,
we obtain six variables about the quality of governance:
voice and accountability, political stability and absence of violence,
government effectiveness, regulatory quality, rule of law, and control of
corruption for 213 countries.

Finally, from Freedom in the World Indicators, compiled by the Freedom
House organization, we obtain another 15 variables providing information
about political rights and civil liberties for 210 countries.

Combining all these variables, we end up with a vector of 94 characteristics
for each country. We then construct
94 measures of dissimilarity as in Example (a) in section \ref{paramCost}. 
\subsection{Results}
We apply SISTA to our working dataset complied in the previous section.
Since the current literature uses between five and eight dissimilarity
measures, we perform two sets of experiments, where we conduct a grid search
of $\gamma$ so that five and eight components are non-zero in the learned $%
\beta$.

\begin{table}[H]
\caption{Learned $\protect\beta$ with (top) five (bottom) eight non-zero
components.}
\label{table}\centering
\begin{adjustbox}{max width=\textwidth}
\begin{tabular}{llllllll}
\toprule Contiguity & \makecell{Common\\ language} & \makecell{Colonial
link\\ after WWII} & \makecell{(log) Geo. \\ distance} & Network & \makecell{Improved sanitation \\ facilities, urban\\Ori.*Dest.} & \makecell{Area \\Ori.*Dest.} & %
\makecell{Life expect. female \\Ori.*Dest.} \\
\midrule 1.797 (0.057) & 0 & 1.224 (0.194) & -1.540 (0.020) & 1.309 (0.178) & 0  & -0.630 (0.043) & 0
\\
\midrule 2.438 (0.058) & 4.412 (0.108) & 2.118 (0.431) & -1.992 (0.048) & 1.633 (0.211) & 2.801 (0.318) & -0.927 (0.062) & 4.202 (0.535) \\
\bottomrule &  &  &  &  &  &  &
\end{tabular}%
\end{adjustbox}
\par
{\footnotesize Note: Standard errors, calculated using 1,000 bootstrapped
samples, are in parentheses. }
\end{table}

Table \ref{table} shows the learned $\beta $ from the two experiments. The
top row indicates that if only five measures of dissimilarity are allowed in
the prediction of migration flows, three of the selected ones are used in most papers in this literature: whether two
countries are contiguous, logarithm of geographic distance between them, and whether they have had a colonial link after World
War II. The fourth one is the network measure introduced in
\cite{Beine15}. Moreover, the signs of $\beta_k$'s corresponding to these four measures  match precisely with previous results. Interestingly,
the fifth selected one has never been used. It
corresponds to the interaction between the areas of the origin and destination countries. The negative sign of the parameter indicates that migration flows are larger between small (vast) origin countries and vast
(resp. small) destination countries compared to origin-destination countries with similar areas.

The second row indicates that if only eight measures of dissimilarity are to
be selected, besides the five mentioned above, the three additional
ones are: whether two countries share a language spoken by at least nine
percent of their respective population, the interaction between the share of
urban population that has access to improved sanitation facilities, and the interaction between the female life expectancy at birth in the origin and destination countries. While the first measure has already been used in \cite{Ramos17}, with its
parameter having the same sign as ours, the other two have not appeared in the literature. The parameters for these new measures are positive,
indicating that migration flows tend to be larger between countries with
similar urban access to improved sanitation facilities and female life
expectancy at birth.

\section{Conclusion\label{sec:conc}}
This paper introduces a new algorithm, which we called SISTA, to learn the
transport cost in optimal transport problems. In this type of problems, one
needs to optimize simultaneously over the potentials and over the parameters
of the cost. As the parameter vector is sparse, we add an $l_{1}$
penalization over the parameters. SISTA alternates between a phase of exact
minimization over the transport potentials and a phase of proximal gradient
descent over the parameters of the transport cost. We prove its linear
convergence and illustrate through numerical experiments its rapid
convergence compared to coordinate descent and ISTA. In our application of
predicting bilateral migration flows, SISTA allows us to learn which
measures of dissimilarity between an origin and a destination country are
the most important ones. Our approach reveals that dissimilarities between
origin and destination countries in terms of area, female life expectancy,
and urban access to improved sanitation facilities, are critical predictors
of bilateral migrations flows that have been absent from this literature.

Our method applies to a broad range of problems in quantitative social
sciences. We present an application to the prediction of bilateral migration
flows using country-specific characteristics and pairwise measures of
dissimilarity between countries. The same technique could be applied to
predicting bilateral trade flows, the matching of workers to jobs, men to
women, children to schools, etc. In all these applications, there exists a
long list of attributes/characteristics upon which measures of
dissimilarity between countries of origin and countries of destination, men
and women, workers and jobs can be constructed and could explain flows or
matches. Our approach allows one to select those that matter the most.

More generally, the idea underlying the SISTA algorithm is broadly
applicable. In many other optimization problems, it may be worthwile using
hybrid methods combining the strengths of several existing descent methods.
As exemplified by SISTA, the hybrid version can be much more efficient than the
methods it combines.


\begin{thebibliography}{99}


\bibitem{BeckTeboulle} Beck, A., \& Teboulle, M. (2009). A fast iterative
shrinkage-thresholding algorithm for linear inverse problems. \textit{SIAM
Journal on Imaging Sciences}, 2(1), 183--202.

\bibitem{Beine15} Beine, M., \& Parsons, C. (2015). Climatic factors as
determinants of international migration. \textit{The Scandinavian Journal of
Economics}, 117(2), 723--767.

\bibitem{Beine16} Beine, M., Bertoli, S., \& Fern\'{a}ndez-Huertas Moraga, J. (2016). A practitioners' guide to gravity models of international migration. \textit{The World Economy}, 39(4), 496--512.

\bibitem{Beine16b} Beine, M. (2016). The role of networks for migration
flows: an update. \textit{International Journal of manpower}.

\bibitem{Belot12} Belot, M., \& Ederveen, S. (2012). Cultural barriers in
migration between OECD countries. \textit{Journal of Population Economics},
25(3), 1077--1105.

\bibitem{BCNP} Benamou, J. D., Carlier, G., Cuturi, M., Nenna, L., \& Peyr\'{e}, G. (2015). Iterative Bregman projections for regularized transportation problems. \textit{SIAM Journal on Scientific Computing}, 37(2), A1111--A1138.

\bibitem{cuturi13} Cuturi, M. (2013). Sinkhorn distances: Lightspeed
computation of optimal transport. In \textit{Advances in Neural Information Processing Systems} (pp. 2292--2300).

\bibitem{CuturiAvis} Cuturi, M., \& Avis, D. (2014). Ground metric learning.
\textit{The Journal of Machine Learning Research}, 15(1), 533--564.

\bibitem{dupuy} Dupuy, A., \& Galichon, A. (2014). Personality traits and
the marriage market. \textit{Journal of Political Economy}, 122(6),
1271--1319.

\bibitem{DGS} Dupuy, A., Galichon, A., \& Sun, Y. (2019). Estimating
matching affinity matrices under low-rank constraints. \textit{Information
and Inference: A Journal of the IMA}, 8(4), 677--689.

\bibitem{Galichon} Galichon, A. (2016). \emph{Optimal Transport Methods in
Economics}. Princeton University Press.

\bibitem{GalichonSalanie} Galichon, A., \& Salani\'{e}, B. (2015). Cupid
invisible hand: Social surplus and identification in matching models.
\textit{Available at SSRN 1804623}.

\bibitem{Grogger11} Grogger, J., \& Hanson, G. H. (2011). Income
maximization and the selection and sorting of international migrants.
\textit{Journal of Development Economics}, 95(1), 42--57.

\bibitem{Huang} Huang, G., Guo, C., Kusner, M. J., Sun, Y., Sha, F., \&
Weinberger, K. Q. (2016). Supervised word mover's distance. In \textit{%
Advances in Neural Information Processing Systems} (pp. 4862--4870).

\bibitem{Kulis} Kulis, B. (2013). Metric learning: A survey. \textit{%
Foundations and Trends in Machine Learning}, 5(4), 287--364.

\bibitem{Idel} Idel, M. (2016). A review of matrix scaling and Sinkhorn's
normal form for matrices and positive maps. \textit{arXiv preprint
arXiv:1609.06349}.

\bibitem{Mayer11} Mayer, T., \& Zignago, S. (2011). Notes on CEPII distances
measures: The GeoDist database.

\bibitem{Ozden11} \"{O}zden, \c{C}., Parsons, C. R., Schiff, M., \&
Walmsley, T. L. (2011). Where on earth is everybody? The evolution of global
bilateral migration 1960-2000. \textit{The World Bank Economic
Review}, 25(1), 12--56.

\bibitem{PeyreCuturi} Peyr\'{e}, G., \& Cuturi, M. (2019). Computational
optimal transport. \textit{Foundations and Trends in Machine Learning},
11(5--6), 355--607.

\bibitem{Ramos17} Ramos, R., \& Suri\~{n}ach, J. (2017). A gravity model of
migration between the ENC and the EU. \textit{Journal of Economic and Social Geography}, 108(1), 21--35.

\bibitem{Santambrogio} Santambrogio, F. (2015). \textit{Optimal Transport for Applied Mathematicians}. Birkh\"{a}user.

\bibitem{villani03} Villani, C. (2003). \textit{Topics in Optimal
Transportation} (No. 58). American Mathematical Society.

\bibitem{villani} Villani, C. (2008). \textit{Optimal Transport: Old and New}
(Vol. 338). Springer Science \& Business Media.

\end{thebibliography}
\end{document}